\tikzset{join/.code=\tikzset{after node path={%
\ifx\tikzchainprevious\pgfutil@empty\else(\tikzchainprevious)%
edge[every join]#1(\tikzchaincurrent)\fi}}}
\tikzset{>=stealth',every on chain/.append style={join},
         every join/.style={->}}
\tikzstyle{labeled}=[execute at begin node=$\scriptstyle,
\newtheorem{theorem}{Theorem}[section]
\newtheorem{proposition}{Proposition}[section]
\newtheorem{lemma}{Lemma}[section]
\newtheorem{definition}{Definition}[section]
\newtheorem{corollary}{Corollary}[section]
\title{}
\begin{document}

\maketitle

\begin{center}
{\bf TROPICAL EMBEDDINGS OF METRIC GRAPHS}\\[.5cm]
{\sc Sylvain Carpentier, Adan Medrano Martin del Campo}\\[.5cm]
{July 29, 2015}
\end{center}
\begin{abstract}
Every graph $\Gamma$ can be embedded in the plane with a minimal number of edge intersections, called its classical crossing number $\text{cross}\left(\Gamma\right)$. In this paper, we prove that if $\Gamma$ is a metric graph it can be realized as a tropical curve in the plane with exactly $\text{cross}\left(\Gamma\right)$ crossings, where the tropical curve is equipped with the lattice length metric. Our result has an application in algebraic geometry, as it enables us to construct a rational map of non-Archimedean curves into the projective plane, whose tropicalization is almost faithful when restricted to their skeleton.
\end{abstract}

\tableofcontents

\newpage

\section{Introduction}

Given a graph $\Gamma$ we denote by $\text{cross}\left(\Gamma\right)$ its classical crossing number, which is the minimal number of edge intersections that occur when $\Gamma$ is embedded into $\mathbb{R}^{2}$. Rabinoff and Baker used the fact that any graph $\Gamma$ can be embedded in $\mathbb{R}^{3}$ to prove the following result.\\

\begin{theorem}
{\bf (Baker, Rabinoff)} \cite[p. 19, Theorem 8.2]{BakerRabinoff13} If $X$ is a smooth proper $K-curve$ and $\Gamma$ is any skeleton of $X$, then there is a rational map $f:X\dashrightarrow \mathbb{P}^{3}$ such that the restriction of its tropicalization to $\Gamma$ is an isometry onto its image.\\
\end{theorem}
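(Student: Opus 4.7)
The plan is to realize $\Gamma$ as a piecewise $\mathbb{Z}$-linear embedded graph in $\mathbb{R}^{3}$ whose induced lattice-length metric matches the metric of $\Gamma$, and then lift the three coordinate functions of this embedding to tropicalizations of rational functions on $X$. The decisive geometric fact, already alluded to in the introduction, is that every graph admits a topological embedding into $\mathbb{R}^{3}$: working in one extra dimension beyond the plane leaves us free of crossings and gives enough room to make the construction go through.

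First, I would produce a piecewise $\mathbb{Z}$-linear embedding $\varphi\colon\Gamma\hookrightarrow\mathbb{R}^{3}$ that is an isometry for the lattice length. Starting from any topological embedding and (possibly after subdividing the edges of $\Gamma$) replacing each edge by one or several straight segments with primitive integer direction vectors, I would choose directions and positions of the added vertices so that (i) the total lattice length along each original edge $e$ equals $\ell(e)$, and (ii) the segments remain pairwise disjoint away from their common endpoints. Subdivision is the source of flexibility: replacing a single edge by a broken line of primitive integer segments realizes essentially any prescribed total displacement and total lattice length simultaneously, while free placement in $\mathbb{R}^{3}$ keeps the image disjoint.

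Next, write $\varphi=(\varphi_{1},\varphi_{2},\varphi_{3})$, where each $\varphi_{i}\colon\Gamma\to\mathbb{R}$ is continuous, piecewise linear, and has integer slopes. For each $i$ I would invoke the standard principle---via the tropical Poincar\'e--Lelong formula together with the surjectivity of the specialization map on degree-zero divisor classes from $X$ to $\Gamma$---that any such function coincides on $\Gamma$ with the restriction of $-\log|f_{i}|$ for some $f_{i}\in K(X)^{\times}$. The rational map $f=[1:f_{1}:f_{2}:f_{3}]\colon X\dashrightarrow\mathbb{P}^{3}$ then tropicalizes on $\Gamma$ to $\varphi$, which is an isometry onto its image by construction.

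The main obstacle is the first step: adjusting the topological embedding into a piecewise $\mathbb{Z}$-linear one that preserves lengths without introducing intersections. The local arithmetic on a single edge---finding a broken line of primitive integer directions realizing prescribed displacement and prescribed total lattice length---is a number-theoretic exercise with abundant solutions, but the solutions must be matched at every vertex and globally compatible across all edges. Once this embedding is in hand, realizing it as the tropicalization of a rational map is a routine application of existing non-Archimedean machinery.
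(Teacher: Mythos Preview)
The paper does not prove this theorem; it is quoted from \cite{BakerRabinoff13} and serves only as background and motivation for the two-dimensional result that follows. There is therefore no proof in the paper against which to compare your proposal. The paper's one-sentence gloss (``Rabinoff and Baker used the fact that any graph $\Gamma$ can be embedded in $\mathbb{R}^{3}$'') is consistent with the strategy you outline.

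As a sketch of how the cited result is actually proved, your outline is broadly on target, but you omit the $\Lambda$-rationality hypothesis that the lifting step requires. The lifting principle you invoke is exactly Proposition~3.1 of the present paper, and it demands not just that each $\varphi_i$ be piecewise affine with integer slopes, but that it be $\Lambda$-rational: breakpoints must occur at points of $\Gamma(\Lambda)$ and values at such points must lie in $\Lambda$. Concretely, this forces the vertices of your piecewise-linear embedding to lie in $\Lambda^{3}$. Since the edge lengths of a skeleton already lie in $\Lambda$, this can be arranged (the paper does the analogous bookkeeping carefully in Lemma~3.4 for the planar case), but your write-up should say so rather than appeal to the Poincar\'e--Lelong formalism without stating the hypothesis it needs.
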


An analogous result cannot be obtained for a rational map $f:X\dashrightarrow \mathbb{P}^{2}$, since $\text{trop}\left(f\right)\left(\Gamma\right)\subset \mathbb{R}^{2}$ might have a crossing number greater than $0$, impeding the possibility of a faithful tropicalization. However, in this paper, we give the following adaptation of the previous result in the two dimensional case.\\

\begin{theorem}
Let $X$ be a $K-curve$ and let $\Gamma$ be a skeleton of $X$. There exists a rational map $f:X\dashrightarrow \mathbb{P}^{2}$ such that its tropicalization $\text{trop}\left(f\right)=-\log\left|f\right|$ restricted to $\Gamma$ is an isometry up to $\text{cross}\left(\Gamma\right)$ crossings.\\
\end{theorem}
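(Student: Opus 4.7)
The plan is to combine the main embedding theorem of this paper, which is announced in the abstract, with the algebraic lifting technique of Baker--Rabinoff. In effect, the three-dimensional argument of Theorem~1.1 is modified by replacing the trivially crossing-free embedding of $\Gamma$ in $\mathbb{R}^{3}$ with the optimally crossing planar tropical embedding whose existence is the core combinatorial content of the paper.

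First, I would invoke the embedding theorem announced in the abstract: realize the skeleton $\Gamma$ as a tropical curve $T \subset \mathbb{R}^{2}$, equipped with the lattice length metric, via an isometry $\varphi : \Gamma \to T$ whose image has exactly $\text{cross}(\Gamma)$ transverse double crossings and is otherwise an embedding. Because $T$ is tropical, the two coordinate functions $\varphi_{1}, \varphi_{2} : \Gamma \to \mathbb{R}$ of $\varphi$ are continuous, piecewise integer-affine, and satisfy the balancing condition at every vertex of $T$ (in particular at every crossing, since the two strands contribute separately to the balancing).

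Second, I would lift $\varphi_{1}$ and $\varphi_{2}$ to rational functions on $X$. This is the step that already appears implicitly in the proof of Theorem~1.1: for a piecewise integer-affine function $\psi : \Gamma \to \mathbb{R}$ on a skeleton, balancing at every vertex is equivalent to $\psi$ being the restriction of $-\log|g|$ for some $g \in K(X)^{\times}$, via the slope formula and the surjectivity of the tropicalization of principal divisors. Applying this to $\psi = \varphi_{i}$ produces $f_{i} \in K(X)^{\times}$ with $\text{trop}(f_{i})|_{\Gamma} = \varphi_{i}$. Setting $f = [1 : f_{1} : f_{2}] : X \dashrightarrow \mathbb{P}^{2}$ then gives a rational map whose tropicalization on $\Gamma$, read in the standard affine chart of $\mathbb{R}^{2} \subset \text{Trop}(\mathbb{P}^{2})$, coincides pointwise with $\varphi$.

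Third, I would verify the isometry claim. Since $\varphi$ is an isometry onto $T$ in the lattice length metric, and since $T$ is a one-dimensional balanced polyhedral subset of $\mathbb{R}^{2}$ whose induced length metric away from the crossings matches the lattice length, $\text{trop}(f)|_{\Gamma}$ is a local isometry everywhere except at the preimages of the $\text{cross}(\Gamma)$ double points, which is precisely the meaning of an isometry up to $\text{cross}(\Gamma)$ crossings.

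The main obstacle is the lifting step: one must know that any balanced, piecewise integer-affine function on the metric skeleton $\Gamma$ is globally the restriction of $-\log|g|$ for some $g \in K(X)^{\times}$, and that the combined data $(f_{1}, f_{2})$ really produces a rational map whose tropicalization is what one computes coordinatewise on $\Gamma$ (rather than being affected by indeterminacy or by contributions from other components of $X^{\mathrm{an}}$ collapsing onto $\Gamma$). Both points are standard in the Baker--Payne--Rabinoff framework, but the verification that the construction is compatible with the prescribed crossings of $T$ is where the details of the argument will sit.
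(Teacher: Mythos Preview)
Your overall architecture matches the paper's: realize $\Gamma$ as a balanced planar tropical curve with $\text{cross}(\Gamma)$ crossings, then lift the two coordinate projections to rational functions on $X$ via Baker--Rabinoff. The gap is in your lifting step.

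You state that for a piecewise integer-affine $\psi$ on a skeleton, \emph{balancing at every vertex} is the condition equivalent to $\psi = -\log|g|\big|_{\Gamma}$ for some $g\in K(X)^{\times}$. That is not the criterion in the cited result (Proposition~3.1 here, Corollary~1.2 of Baker--Rabinoff). The actual hypothesis is that $\psi$ be a \emph{$\Lambda$-rational} tropical meromorphic function: piecewise affine with integer slopes, with all breakpoints in $\Gamma(\Lambda)$ and $\psi(\Gamma(\Lambda))\subset\Lambda$. Balancing of the image graph does force the sums of outgoing slopes of $\varphi_1,\varphi_2$ to vanish, but this plays no role in the lifting; conversely, nothing in the bare embedding theorem (Theorem~2.1) guarantees that $\varphi_1,\varphi_2$ are $\Lambda$-rational. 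If $\Lambda\neq\mathbb{R}$ the generic cr\'eneau construction will produce vertices outside $\Lambda^{2}$ and the lift will fail.

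The paper devotes Lemmas~3.1--3.4 to exactly this point: it first arranges all vertices of the linear embedding to lie in $\mathbb{Q}^{2}$, rescales by some $\lambda\in\Lambda\cap(0,1)$ so that they lie in $\Lambda^{2}$, and then chooses the cr\'eneau lengths (namely $\frac{x}{3},\frac{x}{6n-3},\frac{l-x}{2n}$) so that every new vertex also lands in $\Lambda^{2}$. Only then does Lemma~3.5 conclude that the coordinate projections are $\Lambda$-rational, and Proposition~3.1 applies. Your proposal would need to reinsert this $\Lambda$-rationality adjustment; invoking the embedding theorem as a black box and appealing to balancing is not enough.
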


This is in some sense the most faithful tropicalization we can obtain, since no less than $\text{cross}\left(\Gamma\right)$ crossings may appear in an immersion of $\Gamma$ into the plane. To obtain to this result, we first prove the following theorem.\\

\begin{theorem}
Every abstract metric graph $\Gamma$ admits an isometric balanced embedding up to $\text{cross}\left(\Gamma\right)$ crossings restricted to the embedded edges of $\Gamma$.\\
\end{theorem}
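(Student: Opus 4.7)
The plan is to start from a classical planar drawing of $\Gamma$ realizing $\text{cross}(\Gamma)$ crossings and to deform it, within its isotopy class, into a balanced tropical embedding whose edge lattice lengths match those of $\Gamma$. Since the deformation preserves the isotopy class, the crossing count among the $\Gamma$-edges will remain exactly $\text{cross}(\Gamma)$.

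First I would fix a classical embedding $\phi_{0}\colon \Gamma \hookrightarrow \mathbb{R}^{2}$ realizing exactly $\text{cross}(\Gamma)$ transverse crossings. At each vertex $v \in \Gamma$ I would assign primitive integer direction vectors $\mathbf{d}_{1}^{v},\ldots,\mathbf{d}_{k(v)}^{v}$ to the incident edges, respecting the cyclic order dictated by $\phi_{0}$ and arranged so that they positively span $\mathbb{R}^{2}$; the latter can be achieved by a small local perturbation of the edges near $v$ (and by adjoining auxiliary edges at low-valence or leaf vertices), and it guarantees the existence of positive integer weights $w_{i}^{v}$ solving the balancing identity $\sum_{i} w_{i}^{v}\mathbf{d}_{i}^{v} = \mathbf{0}$.

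For each edge $e$ with endpoints $v_{1}$ and $v_{2}$, I would then build a polygonal path along $\phi_{0}(e)$ departing $v_{1}$ in direction $\mathbf{d}_{e}^{v_{1}}$ with weight $w_{e}^{v_{1}}$ and arriving at $v_{2}$ in direction $-\mathbf{d}_{e}^{v_{2}}$ with weight $w_{e}^{v_{2}}$. Because these boundary data generally disagree, I would insert finitely many trivalent \emph{bending points} along the path, at each of which an auxiliary edge (a finite branch or an unbounded ray) is attached to enforce local balancing; each bend amounts to a small two-dimensional linear algebra problem solvable by choosing an appropriate primitive direction. With directions and weights fixed, the lattice length of each edge is the sum of the lattice lengths of its segments, and these positive parameters can be tuned---inserting further bending points if more degrees of freedom are required---to equal the prescribed metric length.

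The main obstacle is \emph{routing} the auxiliary edges created at the bending points so that they introduce no additional crossings with the edges of $\Gamma$. I expect this to be controllable by placing bending points close to the endpoints of each edge (where $\phi_{0}(\Gamma)$ is locally simple) and by directing the auxiliary edges into the unbounded face of the drawing, where they can be drawn freely. Global consistency across edges sharing a vertex is handled by first fixing the balancing data at every vertex of $\Gamma$ and only then solving the edge-by-edge interpolation problem, exploiting the flexibility of the bending points to reconcile the mismatched boundary data at the two ends of each edge.
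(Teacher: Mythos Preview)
Your outline has the right overall shape, but it has two concrete gaps relative to the definition of \emph{isometric balanced embedding} used here.

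First, the definition requires weight $1$ on every bounded edge coming from $\Gamma$. Your scheme chooses, at each vertex $v$, positive integer weights $w_i^{v}$ so that $\sum_i w_i^{v}\mathbf{d}_i^{v}=0$, and you then carry the possibly different weights $w_e^{v_1}$, $w_e^{v_2}$ along the edge, reconciling them via bending points. Generic positively spanning directions do \emph{not} yield weights equal to $1$ on the $\Gamma$-edges, so the output is not an isometric balanced embedding in the sense required (and for the intended application the coordinate projections would then fail to have coprime integer slopes along $\Gamma$). The remedy is much simpler than positive spanning: fix weight $1$ on all $\Gamma$-segments from the start, compute at each vertex $p$ the integer vector $\mathbf{v}_p=\sum_i \vec r_i(p)$, and attach a single infinite ray in direction $-\mathbf{v}_p$ with weight $\gcd$ of its coordinates. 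This balances every vertex without ever disturbing the $\Gamma$-weights, and the same trick works at every bending point along an edge.

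Second, your length-matching step (``tune the positive parameters, inserting further bending points if needed'') tacitly assumes the initial polygonal path has tropical length \emph{below} the target; but bending points and auxiliary segments only \emph{add} tropical length, so if the drawn path is already tropically longer than the prescribed edge length you are stuck. The paper resolves this with a global rescaling: after producing a piecewise-linear, rational-slope drawing with exactly $\text{cross}(\Gamma)$ crossings, one scales the whole picture down so that every segment has tropical length strictly less than the smallest edge length of $\Gamma'$, and then elongates each edge to the correct value by inserting a \emph{cr\'eneau}---a rectangular zig-zag confined to an $\varepsilon$-tube around the segment whose total tropical length can be set to any prescribed value. Because these tubes are pairwise disjoint, the cr\'eneaux create no new intersections among $\Gamma$-edges; and your ``main obstacle'' of routing the auxiliary rays is in fact a non-issue, since the theorem only counts crossings \emph{restricted to the embedded edges of $\Gamma$}, not crossings involving the added infinite rays.
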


We will define isometric balanced embeddings in section $2.3$. In section $1$, we present the construction of the isometric balanced embedding of a metric graph $\Gamma$, and in section $2$ we use the lemma by Baker and Rabinoff along with our embedded construction in order to prove our main result.\\

\section{Isometric Balanced Embedding with $\text{cross}\left(\Gamma\right)$ Crossings}

In this section we present a construction of an isometric balanced embedding of a given metric graph, which has exactly $\text{cross}\left(\Gamma\right)$ crossings. We begin by recalling the definitions of metric and balanced graphs.\\

\subsection{Balanced Graphs}

:\\
\begin{definition}
A {\bf $1-$dimensional polyhedral complex} is a connected, finite union of segments and rays in the plane, where the transversal intersection of any two segments or rays is called a {\bf vertex}, and any segments and rays which endpoints are vertices are called {\bf edges}. We say the polyhedral complex is {\bf rational} if all its edges have rational slopes. \\ 
\end{definition}

\begin{definition}
Let $p$ be a vertex and let $r$ be an edge of the polyhedral complex with rational slope going from $p$.  The  {\bf primitive vector} of $r$ is the vector $\vec{r}\left(p\right)=\left(m, n\right)^{t}$ such that $m$ and $n$ are relatively prime integers and $\vec{r}\left(p\right)$ is parallel to $r$ and pointing out of the vertex $p$.
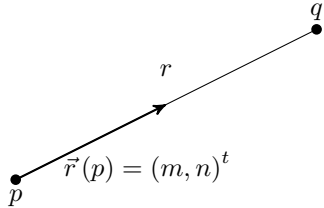
\begin{figure}[h]
\begin{tikzpicture}
\fill (0, 0) circle (2pt) node[below] {$p$};
\draw (0, 0) -- (4, 2);
\draw[thick, ->] (0, 0) -- (2, 1);
\draw (1.75, .5) node[right, below] {$\vec{r}\left(p\right)=\left(m, n\right)^{t}$};
\draw (2, 1.25) node[left, above] {$r$};
\fill (4, 2) circle (2pt) node[above] {$q$};
\end{tikzpicture}
\caption{A line segment and its primitive vector $\vec{r}\left(p\right)$.}
\end{figure}

\end{definition}

\begin{definition}
A {\bf weighted $1$-dimensional polyhedral complex} is a polyhedral complex for which every edge $e$ is assigned a positive integer $w_e$ called its {\bf weight}.
Let $p$ be a vertex of a weighted $1$-dimensional polyhedral complex of degree $n$. The $n$ edges $r_{1}, r_{2}, \ldots, r_{n}$ departing from $p$ are assigned weights $\omega_{1}, \omega_{2}, \ldots, \omega_{n}$, respectively, and have rational slopes. We say that the {\bf balancing condition} holds on $p$ if 
\\
\[\sum_{i=1}^{n}{\omega_{i}\vec{r}_{i}\left(p\right)}=0.\]
\\
where $\vec{r}_{i}\left(p\right)$ is the primitive vector of the edge $r_{i}$.\\
\end{definition}

\begin{definition}
A {\bf balanced graph} is a weighted $1-$dimensional rational polyhedral complex for which the balancing condition holds on each vertex.\\
\end{definition}

\subsection{Tropical Length}

:\\
\\
We shall restrict ourselves to connected balanced graphs. We will now see that one can endow a balanced graph with a metric, as follows.\\

\begin{definition}
Let $\ell$ be a segment with rational slope in $\mathbb{R}^{2}$ with endpoints $p_{1}$ and $p_{2}$. Let $\mathbf{v}_{l}$ be the primitive vector of $l$  going out from $p_{1}$. Let $\alpha$ be the (unique) positive real number such that
\\
\[\alpha \mathbf{v}_{l}=p_{2}-p_{1}.\]
\\
$\alpha$ is called {\bf tropical length} of $\ell$.\\
\end{definition}

Bounded edges of our balanced graph can be assigned the above tropical length, and rays are assigned infinite lengths (but finite portion of rays have a finite length associated to them). This endows the balanced graph with a metric graph structure.\\

\begin{definition}
An {\bf abstract metric graph $\Gamma$} is a finite graph such that every edge is assigned a positive real number or infinity, which is called the {\bf length} of the edge. Any edge with infinite length must have a degree $1$ vertex as one its endpoints, which will be referred to as an {\bf infinite vertex}.\\
\end{definition}

\begin{definition}
A {\bf subdivision of $\Gamma$} consists in replacing an edge with two consecutive edges whose lengths add up to the length of the original edge. If the original edge was infinite, then the subdivided edge incident to the infinite vertex must also be infinite.\\
\\
A {\bf reverse subdivision} consists in removing a vertex $p$ of degree $2$ which is connected to different vertices, and replacing the edges of $p$ by a single edge connecting the vertices to which $p$ was connected. The length of this new edge is defined as the sum of the two original edges. If one of the removed edges had infinite length, then the new edge also has infinite length. \\
\\
An {\bf elementary tropical modification of $\Gamma$} is the addition of an infinite edge to a non-infinite vertex.\\
\end{definition}

\begin{definition}
Let $\Gamma$ be a metric graph. A {\bf tropical modification} is a finite sequence of elementary tropical modifications, subdivisions, and reverse subdivisions. \cite[p. 4]{CDMY14}\\
\end{definition}

\subsection{Balanced Embeddings}

:\\
\begin{definition}
Let $\Gamma$ be an abstract metric graph. An {\bf isometric balanced embedding} of $\Gamma$ is a realization of a tropical modification $\Gamma'$ of $\Gamma$, as a balanced graph $B\left(\Gamma\right)$ with weights $1$ on the bounded edges of $\Gamma$, and which preserves the lengths, restricted to $\Gamma$.\\ 
\end{definition}

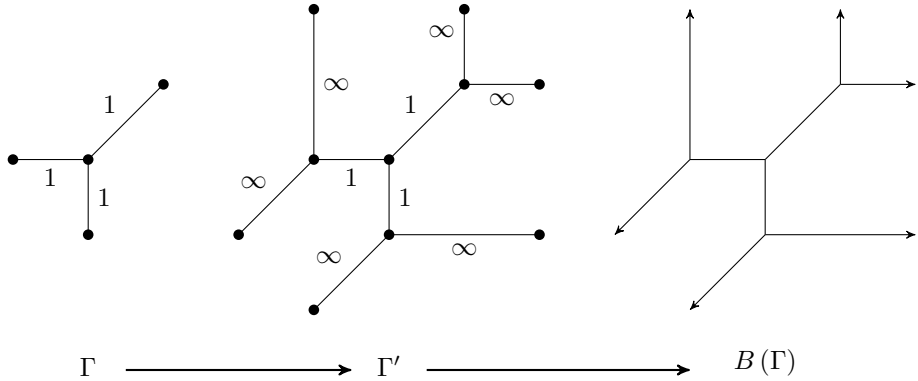
\begin{figure}[h]
\centering
\begin{tikzpicture}
\draw (6, 1) -- (5, 0);
\draw[->] (6, 1) -- (6, 2);
\draw[->] (6, 1) -- (7, 1);
\draw (4, 0) -- (5, 0);
\draw[->] (4, 0) -- (4, 2);
\draw[->] (4, 0) -- (3, -1);
\draw (5, -1) -- (5, 0);
\draw[->] (5, -1) -- (7, -1);
\draw[->] (5, -1) -- (4, -2);

\draw (1, 1) -- (0, 0);
\draw (1, 1) -- (1, 2);
\draw (1, 1) -- (2, 1);
\draw (-1, 0) -- (0, 0);
\draw (-1, 0) -- (-1, 2);
\draw (-1, 0) -- (-2, -1);
\draw (0, -1) -- (0, 0);
\draw (0, -1) -- (2, -1);
\draw (0, -1) -- (-1, -2);

\fill (0, 0) circle (2pt);
\fill (1, 1) circle (2pt); 
\fill (-1, 0) circle (2pt); 
\fill (0, -1) circle (2pt);
\fill (1, 2) circle (2pt);  
\fill (2, 1) circle (2pt); 
\fill (2, -1) circle (2pt); 
\fill (-1, -2) circle (2pt);
\fill (-1, 2) circle (2pt); 
\fill (-2, -1) circle (2pt);

\draw (.5, .5) node[above left] {$1$};
\draw (-.5, 0) node[below] {$1$};
\draw (0, -.5) node[right] {$1$};
\draw (-1, 1) node[right] {$\infty$};
\draw (-1.5, -.5) node[above left] {$\infty$};
\draw (1, -1) node[below] {$\infty$};
\draw (-.5, -1.5) node[above left] {$\infty$};
\draw (1.5, 1) node[below] {$\infty$};
\draw (1, 1.5) node[above left] {$\infty$};

\draw (6, 1)+(-9, 0) -- (-4, 0);
\draw (4, 0)+(-9, 0) -- (-4, 0);
\draw (5, -1)+(-9, 0) -- (-4, 0);

\fill (5, 0)+(-9, 0) circle (2pt);
\fill (6, 1)+(-9, 0) circle (2pt); 
\fill (4, 0)+(-9, 0) circle (2pt); 
\fill (5, -1)+(-9, 0) circle (2pt);

\draw (-3.5, .5) node[above left] {$1$};
\draw (-4.5, 0) node[below] {$1$};
\draw (-4, -.5) node[right] {$1$};

\draw[thick, ->] (-3.5, -2.8) -- (-.5, -2.8);
\draw[thick, ->] (.5, -2.8) -- (4, -2.8);

\draw (-4, -3) node[above] {$\Gamma$};
\draw (0, -3) node[above] {$\Gamma'$};
\draw (5, -3) node[above] {$B\left(\Gamma\right)$};
\end{tikzpicture}
\caption{A metric graph $\Gamma$, one of its tropical modifications $\Gamma'$ and a realization $B\left(\Gamma\right)$ of $\Gamma$ as a balanced graph.}
\end{figure}

It may be the case that any such realization of a metric graph $\Gamma$ has intersections, since the classical crossing number of $\Gamma$, $\text{cross}\left(\Gamma\right)$, might be greater than $0$. We will show that it is possible to have an isometric balanced embedding of any metric graph $\Gamma$, with no more than  $\text{cross}\left(\Gamma\right)$ crossings restricted to the embedded edges of $\Gamma$.\\

\subsection{Linear Embedding of $\Gamma$ with $\text{cross}\left(\Gamma\right)$ Crossings}
:\\
\\
We present a construction of an embedding of an abstract metric graph $\Gamma$ as a rational polyhedral complex with $\text{cross}\left(\Gamma\right)$ crossings. In this section, we don't care about isometry, and this issue will be dealt with in the next sections.\\

\begin{lemma}
Let $\Gamma$ be a metric graph. There exists an embedding of $\Gamma$ in $\mathbb{R}^{2}$ with exactly $\text{cross}\left(\Gamma\right)$ self crossings of edges of $\Gamma$ such that every embedded edge is piecewise linear with rational slopes.
\end{lemma}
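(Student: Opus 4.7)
My approach is to reduce the statement to a classical planar straight-line embedding theorem, by temporarily promoting each crossing of $\Gamma$ to an auxiliary $4$-valent vertex so as to obtain a genuine planar graph.

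First I would fix, by the definition of $\text{cross}\left(\Gamma\right)$, a continuous drawing $\iota:\Gamma\to\mathbb{R}^{2}$ realising $N=\text{cross}\left(\Gamma\right)$ self-intersections. A small generic perturbation of $\iota$ leaves $N$ unchanged and ensures that every crossing is a transverse interior intersection of exactly two edges, and that no three edges pass through any common point. I then form an auxiliary finite graph $G$ by inserting, at each of the $N$ crossing points, a new degree-$4$ vertex $v_{c}$ that splits the two crossing edges into four half-edges. The map $\iota$ now becomes a planar embedding of $G$ (no crossings left), and the rotation system induced at each new vertex $v_{c}$ has the key property that the four incident half-edges appear in a cyclic order in which the two halves of the same $\Gamma$-edge are opposite.

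Next I would invoke a rotation-system preserving grid embedding theorem---for instance the de~Fraysseix--Pach--Pollack theorem---which realises any simple planar graph as a straight-line embedding on the integer lattice $\mathbb{Z}^{2}$, respecting any prescribed combinatorial planar embedding. Applied to $G$ with the rotation system inherited from $\iota$, this yields a drawing $\tilde\iota:G\to\mathbb{R}^{2}$ in which every vertex lies at a lattice point and every edge is a straight segment, and therefore has rational slope. Forgetting the inserted vertices $v_{c}$, each original edge of $\Gamma$ becomes a piecewise linear path with rational slopes (with at most one bend at each former crossing it used to pass through); and at each $v_{c}$ the two original $\Gamma$-edges still cross transversally, because the preserved rotation system keeps the two halves of each $\Gamma$-edge alternating around $v_{c}$. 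Hence $\tilde\iota$ is a piecewise linear rational-slope embedding of $\Gamma$ with exactly $\text{cross}\left(\Gamma\right)$ crossings.

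The main step requiring care is the preservation of the rotation system at each $v_{c}$: without it, the four half-edges of $G$ could end up in a non-alternating cyclic order, and the two original $\Gamma$-edges would fail to cross transversally at that point; using a rotation-system preserving straight-line embedding theorem is therefore essential. Infinite rays of $\Gamma$ are handled by first truncating them inside a large disk $D$ containing all crossings and finite vertices, running the argument above on the resulting finite subgraph, and finally extending each truncated ray outward from $\partial D$ as a rational-slope half-line; since these extensions lie entirely outside $D$ and can be routed to be pairwise disjoint, they introduce no new crossings.
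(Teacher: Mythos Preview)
Your approach is correct and takes a genuinely different route from the paper's. The paper proceeds by a hands-on approximation: starting from a topological optimal drawing, it removes small disks around all vertices and crossings, replaces each remaining arc by a fine polygonal path inside a thin tubular neighbourhood, reconnects the pieces by straight chords near vertices and crossings (arguing that at each crossing the two chords must still intersect, else the drawing was not optimal), and finally perturbs every vertex into $\mathbb{Q}^{2}$ so that all segments acquire rational slope. You instead planarise $\Gamma$ by inserting a $4$-valent vertex at each crossing and invoke the de~Fraysseix--Pach--Pollack grid-embedding theorem for the resulting plane graph $G$, with its rotation system; rational slopes then come for free from the integer grid, and the preserved alternating cyclic order at each inserted vertex guarantees that the two original $\Gamma$-edges genuinely cross there once the auxiliary vertex is forgotten. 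Your argument is shorter and more conceptual, at the price of importing a nontrivial black box; the paper's argument is longer but entirely elementary and self-contained.

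Two small points you should make explicit. First, F\'ary/dFPP-type theorems apply to \emph{simple} plane graphs, so loops and multiple edges of $\Gamma$ must be subdivided before forming $G$; the paper does exactly this under the name ``tropical modification''. Second, your treatment of infinite edges needs rephrasing: after applying dFPP the truncation points have moved to new lattice positions, so the phrase ``extending each truncated ray outward from $\partial D$'' no longer makes sense. What you want to say is that the truncated endpoints are degree-$1$ vertices lying on the outer face of the dFPP drawing, and from each of them one can attach a rational-slope half-line into the unbounded region, choosing the directions one at a time so that no two of these half-lines meet and none re-enters the bounded drawing.
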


\begin{proof}
First, we make an appropriate tropical modification on $\Gamma$ to obtain a graph $\Gamma'$ such that $\Gamma'$ is a simple graph (with no loops or multiple edges). This doesn't increase the number of crossings, so $\text{cross}\left(\Gamma\right)=\text{cross}\left(\Gamma'\right)$. We can embed $\Gamma'$ in the plane with $\text{cross}\left(\Gamma\right)$ crossings by definition. We can make suitable subdivisions to $\Gamma'$, obtaining a graph $\Gamma''$, in order to locate the vertices of $\Gamma''$ on the plane, satisfying that every segment is a straight line and we still have $\text{cross}\left(\Gamma\right)$ crossings. We do this as follows. Let $G$ be an embedding of $\Gamma'$ in $\mathbb{R}^{2}$ such that $G$ has exactly $\text{cross}\left(\Gamma\right)$ crossings. Now, let $\varepsilon>0$ such that the $\varepsilon$ neighborhoods $N_{\varepsilon}\left(v\right)$ around each vertex and every crossing of $G$ are disjoint. Let $V\left(G\right)$ and $C\left(G\right)$ be the set of vertices and crossings of $G$, respectively. Then, consider
\\
\[G'=G-\left(\left(\bigcup_{v\in V\left(G\right)}N_{\varepsilon}\left(v\right)\right)\cup\left(\bigcup_{c\in C\left(G\right)}{N_{\varepsilon}\left(c\right)}\right)\right).\]
\\
We have that $G'$ contains finitely many disjoint connected bounded closed paths. Let $\text{Comp}\left(G'\right)$ be the set of connected components of $G'$. For any $\mathcal{C}, \mathcal{C'}\in \text{comp}\left(G'\right)$ there exists some $\delta_{\mathcal{C}, \mathcal{C}'}>0$ such that the $\delta_{\mathcal{C}, \mathcal{C}'}$ neighborhoods of any point in $\mathcal{C}$ and any point in $\mathcal{C}'$ are disjoint. Hence, let
\\
\[\delta=\min_{\mathcal{C}, \mathcal{C}'\in \text{Comp}\left(G'\right)}\delta_{\mathcal{C}, \mathcal{C}'}.\]
\\
We must have that these component neighborhoods,
\\
\[\mathcal{N}_{\mathcal{C}}=\bigcup_{c\in \mathcal{C}}{N_{\delta}\left(c\right)}\]
\\
are disjoint. Suppose $\mathcal{C}\in \text{Comp}\left(G'\right)$ has euclidean length $L$. Subdivide $\mathcal{C}$ into $N$ paths, each of length $\frac{L}{N}<\delta$. Joining these new points with straight lines ensures that each of these segments is contained in $\mathcal{N}_{\mathcal{C}}$. Doing this for every $\mathcal{C}'\in \text{Comp}\left(G'\right)$, we have constructed piecewise linear non intersecting paths. Finally, we join the endpoints of each connected component $\mathcal{C}$ the following way. If one of the two endpoints of $\mathcal{C}$ was originally connected to a vertex of $G$, we join them back with a straight line. If one of the endpoints of $\mathcal{C}$ was originally connected to an intersection points $I$, then we do the following.

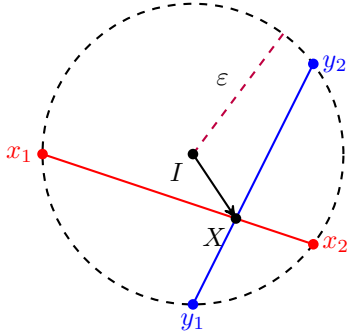
\begin{figure}[h]
\centering
\begin{tikzpicture}
\draw[thick, dashed] (0, 0) circle (2cm);
\draw[thick, dashed, purple] (0, 0) -- (1.2, 1.6);
\draw (.6, .8) node[above left] {$\varepsilon$}; 
\fill[red] (-2, 0) circle (2pt) node[left] {$x_{1}$};
\fill[red] (1.6, -1.2) circle (2pt) node[right] {$x_{2}$};
\fill[blue] (0, -2) circle (2pt) node[below] {$y_{1}$};
\fill[blue] (1.6, 1.2) circle (2pt) node[right] {$y_{2}$};
\fill (0, 0) circle (2pt) node[below left] {$I$};
\draw[thick, red] (-2, 0) -- (1.6, -1.2);
\draw[thick, blue] (0, -2) -- (1.6, 1.2);
\fill (intersection cs: first line={(-2, 0)--(1.6, -1.2)}, second line={(0, -2)--(1.6, 1.2)}) coordinate (X) circle (2pt) node[below left] {$X$};
\draw[thick, ->] (0, 0) -- (X); 
\end{tikzpicture}
\caption{Rejoining endpoints of intersecting paths.}
\end{figure}

We consider the four connected components which were connected to $I$, and let their endpoints that were connected to $I$ be $x_{1}, x_{2}, y_{1}, y_{2}$, where $x_{1}$ and $x_{2}$ were originally part of the same edge, and $y_{1}$ and $y_{2}$ were part of the same edge. Recall these four points lie on a circle. If the segments $x_{1}x_{2}$ and $y_{1}y_{2}$ do not intersect, one could originally change the path over the edge joining $x_{1}$ and $x_{2}$ for the segment joining them, and do the same with $y_{1}$ and $y_{2}$, and then our intersection $I$ vanishes, reducing the number of crossings of $G$, which is a contradiction. Hence, $x_{1}x_{2}$ and $y_{1}y_{2}$ cross. Therefore, joining these four endpoints by drawing the straight lines $x_{1}x_{2}$ and $y_{1}y_{2}$ creates our intersection again. This creates an embedding of $\Gamma$ with straight lines, which has $\text{cross}\left(\Gamma\right)$ crossings.\\
\\ 
Now, consider a vertex $v\in \mathbb{R}^{2}$. Since $\mathbb{Q}$ is dense in $\mathbb{R}$ then we can consider any neighborhood of radius $\varepsilon>0$ of $v$, $N_{\varepsilon}\left(v\right)$ and find $w_{v}\in \mathbb{Q}^{2}$ such that $w\in N_{\varepsilon}\left(v\right)$. Note that we can take disjoint neighborhoods around every vertex of $\Gamma$, and we move every vertex $v$ to a point $w_{v}\in \mathbb{Q}^{2}$ as described before. Choosing $\varepsilon$ to be sufficiently small, we can ensure that the number of self crossings does not change. Finally, since for every $v\in \Gamma$ we have $w_{v}\in \mathbb{Q}^{2}$, then the slope of any segment joining the points $w_{v}$ and $w_{u}$ with $u, v\in \Gamma$ must be rational. In addition, the crossing of the lines can be modified so that the lines which cross have as primitive vectors the vectors $\left(1, 0\right)^{t}$ and $\left(0, 1\right)^{t}$ as in figure 4, which which preserves having rational slopes.\\

\begin{figure}[h]
\centering
 \begin{tikzpicture}
 \draw[thick] (0, 3) -- (6, 0);
 \draw[thick] (1.5, 0) -- (4.5, 3);
 \draw[->] (5.5, 1.5) -- (6.5, 1.5);
 \draw[thick] (6, 3) -- (8, 2) -- (8, 1) -- (10, 1) -- (12, 0);
 \draw[thick] (7.5, 0) -- (8, .5) -- (9.5, .5) -- (9.5, 2) -- (10.5, 3);
 \draw[thick, blue, dashed] (8, 2) -- (10, 1);
 \draw[thick, blue, dashed] (8, .5) -- (9.5, 2);
 \fill[red] (9, 1.5) circle (2pt) node[above] {$X$};
 \fill[red] (3, 1.5) circle (2pt) node[above] {$X$};
 \fill[red] (9.5, 1) circle (2pt) node[below right] {$X'$};
 \draw[thick, ->] (9, 1.5) -- (9.5, 1);
 \end{tikzpicture}
 \caption{Transformation of intersections into orthogonal intersections of paths.}
 \end{figure}
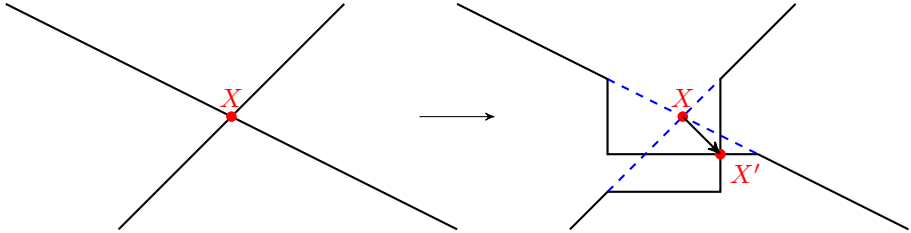
\end{proof}

\subsection{Cr\'eneaux}

:\\
\\
Now that we have embedded $\Gamma$ in the plane, we introduce our main tool to modify the tropical edge length of an edge.\\

\begin{definition}
Consider a segment $\ell$ in $\mathbb{R}^{2}$. A {\bf cr\'eneau on $\ell$} is the insertion of a finite piecewise linear path on $\ell$, along with some rays such that
\\
\begin{itemize}
\item{Rays are attached to every vertex of the cr\'eneau, along with a suitable weight on them, make the balancing condition holds on these vertices.}
\item{All angles formed by this path are $\pm\frac{\pi}{2}$.}
\item{The sequence of the signs of the angles formed by the path is of the form
\\
\[+ - - + + - - + \cdots - + + - \]}
\item{All segments parallel to $\ell$ have (euclidean) length $\alpha$ and all segments perpendicular to $\ell$ have length $\beta$.}
\\
\\
\end{itemize}

\begin{figure}[h]
\begin{tikzpicture}
\draw[very thick] (0, 0) -- (3, 0) -- (3, 1) -- (3.5, 1) -- (3.5, 0) -- (4, 0) -- (4, 1) -- (4.5, 1) -- (4.5, 0) -- (5, 0) -- (5, 1) -- (5.5, 1) -- (5.5, 0) -- (8.5, 0);
\draw (0, 0) node[left] {$\ell$};
\draw (3, .5) node[left] {$\beta$};
\draw (3.25, 1) node[above] {$\alpha$};
\draw[->] (3, 0) -- (5, -2);
\draw[->] (3, 1) -- (1, 3);
\draw[->] (3.5, 1) -- (5.5, 3);
\draw[->] (3.5, 0) -- (1.5, -2);
\draw[->] (4, 0) -- (6, -2);
\draw[->] (4, 1) -- (2, 3);
\draw[->] (4.5, 1) -- (6.5, 3);
\draw[->] (4.5, 0) -- (2.5, -2);
\draw[->] (5, 0) -- (7, -2);
\draw[->] (5, 1) -- (3, 3);
\draw[->] (5.5, 1) -- (7.5, 3);
\draw[->] (5.5, 0) -- (3.5, -2);
\end{tikzpicture}
\caption{A cr\'eneau on the segment $\ell$.}
\end{figure}
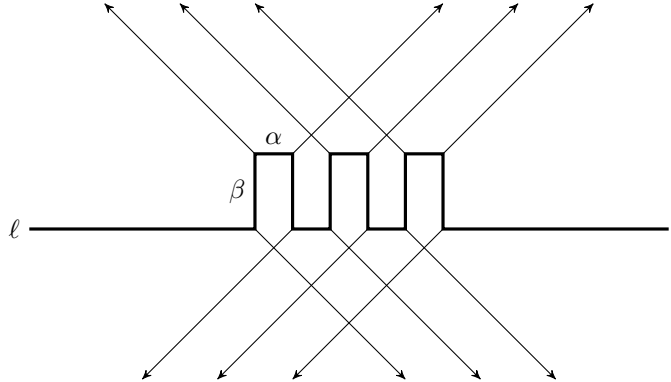
\end{definition}

We prove that given a segment with rational slope in $\mathbb{R}^{2}$ of length $x$, we can can modify its length to be any $\varepsilon>x$, by tropically modifying $\Gamma$ and adding a suitable cr\'eneau as the embedded edge of the tropical modification of $\Gamma$.\\

\subsection{Elongation of the Tropical Length of a Segment}

:\\
\begin{lemma}
Let $x, \alpha, \varepsilon\in \mathbb{R}^{+}$ with $\alpha>x$. Let $\ell$ be the segment joining the points $\left(0, 0\right)$ and $\left(x, 0\right)$ in $\mathbb{R}^{2}$. We can insert suitable cr\'eneau on $\ell$ so that the tropical length of the resultant path is $\alpha$ and the whole path is contained in the rectangle $\left[0, x\right]\times \left[-\varepsilon, \varepsilon\right]\subset \mathbb{R}^{2}$.\\
\end{lemma}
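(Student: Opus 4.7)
The plan is to insert into $\ell$ a single cr\'eneau with $n$ teeth, whose vertical amplitude $b$ and horizontal tooth-width $a$ are chosen to make the tropical length exactly $\alpha$ while keeping everything inside $[0,x]\times[-\varepsilon,\varepsilon]$. Each tooth replaces a horizontal stretch of $\ell$ of length $2a$ with two vertical segments of length $b$ and a horizontal top of length $a$, followed by a baseline gap of length $a$ before the next tooth, exactly as in the figure for the cr\'eneau definition. All angles are $\pm \pi/2$ and all segments parallel (resp.\ perpendicular) to $\ell$ have equal length $a$ (resp.\ $b$), so the insertion genuinely fits the cr\'eneau template.

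Since every segment of the cr\'eneau is axis-aligned, its primitive vector is one of $(\pm 1,0), (0,\pm 1)$, and its tropical length coincides with its euclidean length. Replacing a straight piece of $\ell$ of length $2na$ by a cr\'eneau of total euclidean length $2n(a+b)$ therefore increases the tropical length by exactly $2nb$. Setting
\[
b = \frac{\alpha - x}{2n}
\]
makes the new tropical length equal to $x + 2nb = \alpha$. To fit the path inside the prescribed rectangle, first pick any integer $n > (\alpha - x)/(2\varepsilon)$, which forces $b < \varepsilon$, and then pick $a \in (0, x/(2n))$, so the horizontal footprint $2na$ of the cr\'eneau fits inside $\ell$. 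The resulting path lies in $[0,x]\times[-b,b]\subset [0,x]\times[-\varepsilon,\varepsilon]$.

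It remains to verify the balancing condition at the $4n$ new corners. At each such corner the two adjacent edges have outgoing primitive vectors $\vec{u}, \vec{v} \in \{\pm(1,0),\pm(0,1)\}$ with $\vec{u}\perp\vec{v}$; attaching a ray with primitive vector $-\vec{u}-\vec{v}$ (one of $\pm(1,1),\pm(1,-1)$, already primitive) and weight $1$ yields $\vec{u}+\vec{v}+(-\vec{u}-\vec{v})=0$, so balancing holds. These additions amount to elementary tropical modifications on the subdivided edge $\ell$, so the whole construction is a tropical modification of $\ell$. There is no real obstacle beyond parameter bookkeeping: the only care required is to fix $n$ first (to secure $b\le \varepsilon$) and then $a$ (to secure the horizontal fit). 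The independence of these two choices is precisely what makes the cr\'eneau flexible enough to realize an arbitrary prescribed tropical length $\alpha>x$ inside an arbitrarily thin horizontal strip around $\ell$.
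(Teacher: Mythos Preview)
Your argument is correct and follows essentially the same approach as the paper: insert an axis-aligned cr\'eneau, use that tropical length equals euclidean length for axis-parallel segments, and choose the number of vertical segments large enough that each has height at most $\varepsilon$. The only cosmetic difference is that the paper fixes the horizontal footprint of the cr\'eneau to be the middle third $[x/3,2x/3]$ (so its ``$a$'' is determined by the number of teeth), whereas you keep $a$ as a free parameter constrained by $2na<x$; both choices work, and your added remarks on the balancing rays make explicit what the paper leaves to the cr\'eneau definition.
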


\begin{proof}
The initial tropical length of this segment is $x$. If $\alpha>x$ then we insert a cr\'eneau with endpoints on the endpoints $\left(\frac{x}{3}, 0\right)$ and $\left(\frac{2x}{3}, 0\right)$. The tropical length of the segments which form the cr\'eneau is equal to their euclidean length, since the primitive vectors in the directions of these segments are $\left(1, 0\right)^{t}$ and $\left(0, 1\right)^{t}$.\\
\\
Now, the total length of the horizontal segments in the cr\'eneau is $\frac{x}{3}$. If the length of a vertical segment forming the cr\'eneau is $\delta$, and there are $m$ such segments, then the total length of those segments is $m\delta$. Hence, we need that
\\
\begin{align*}
\alpha-\frac{2x}{3}&=\frac{x}{3}+m\delta \\
\frac{\alpha-x}{m}&=\delta\leq\varepsilon
\end{align*}
\\
so, letting $m>\frac{\alpha-x}{\varepsilon}$, we can choose our appropriate $\delta$ and hence the length of our new path is $\alpha$.
\end{proof}

Since rotation by $\theta$ angle is equivalent to the action of
\\
\[M_{\theta}=
\begin{pmatrix}
\cos\left(\theta\right) & -\sin\left(\theta\right)\\
\sin\left(\theta\right) & \cos\left(\theta\right) \\
\end{pmatrix}
\in SL_{2}\left(\mathbb{R}\right)\]
\\
over $\mathbb{R}^{2}$, letting $\theta\in\left[0, \frac{\pi}{2}\right)$ such that $\tan\left(\theta\right)=\frac{p}{q}$ where $p, q$ are non negative relatively prime integers, we have that for an integral vector $\left(x, y\right)^{t}\in \mathbb{Z}^{2}$,
\\
\[M_{\theta}
\begin{pmatrix}
x \\
y
\end{pmatrix}=
\frac{1}{\sqrt{p^{2}+q^{2}}}\begin{pmatrix}
p& -q \\
q & p
\end{pmatrix}
\begin{pmatrix}
x \\
y
\end{pmatrix}
=
\frac{1}{\sqrt{p^{2}+q^{2}}}
\begin{pmatrix}
px-qy \\
qx+py
\end{pmatrix}.\]
\\
Then it follows that
\\
\[M_{\theta}
\begin{pmatrix}
1 \\
0
\end{pmatrix}=
\frac{1}{\sqrt{p^{2}+q^{2}}}
\begin{pmatrix}
p \\
q
\end{pmatrix}
\quad\quad\quad
M_{\theta}
\begin{pmatrix}
0 \\
1
\end{pmatrix}=
\frac{1}{\sqrt{p^{2}+q^{2}}}
\begin{pmatrix}
-q \\
p
\end{pmatrix}
\]
\\
So after the action of $M_{\theta}$, the primitive vectors parallel to the image of $\left(1, 0\right)^{t}$ and $\left(0, 1\right)^{t}$ are $\left(p, q\right)^{t}$ and $\left(-q, p\right)^{t}$, respectively. Then the tropical length of both segments is affected by a factor of $\sqrt{p^{2}+q^{2}}$ by this rotation. If $\theta=\frac{\pi}{2}$, we have that
\\
\[M_{\frac{\pi}{2}}
\begin{pmatrix}
1 \\
0
\end{pmatrix}
=\begin{pmatrix}
0 \\
1
\end{pmatrix}
\quad\quad\quad
M_{\frac{\pi}{2}}
\begin{pmatrix}
0 \\
1
\end{pmatrix}
=\begin{pmatrix}
-1 \\
0
\end{pmatrix}
\]
\\
So in this case the tropical length is preserved. Now, reflection by the $x-$axis or $y-$axis just changes the sign of one coordinate of our vectors, and therefore, tropical lengths are preserved by reflections along these axis. This extends the value that $\theta$ can take to
\\
\[\Theta=\left\{\frac{\left(2k+1\right)\pi}{2}\mid k\in \mathbb{Z}\right\}\cup\left\{x\mid \tan\left(x\right)\in \mathbb{Q}\right\}\]
\\
and our proof of our previous lemma is still valid. Namely,

\begin{lemma}
Let $x, \alpha, \varepsilon\in \mathbb{R}^{+}$ with $\alpha>x$. Let $\ell$ be the segment joining the points $\left(0, 0\right)$ and $\left(x\cos\left(\theta\right), x\sin\left(\theta\right)\right)$ in the plane, with $\theta\in \Theta$. We can insert suitable cr\'eneaux on $\ell$ so that the tropical length of the resultant path is $\alpha$ and the whole path is contained in the rectangle $\left[0, x\right]\times \left[-\varepsilon, \varepsilon\right]\subset \mathbb{R}^{2}$, rotated by an angle of $\theta$ around $\left(0, 0\right)$.\\
\end{lemma}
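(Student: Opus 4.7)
The plan is to reduce to the previous (horizontal) lemma via the rotation $M_\theta$, composed where necessary with reflections along the coordinate axes. The computation preceding this statement shows that when $\theta \in [0,\pi/2)$ with $\tan\theta = p/q$ ($p,q$ coprime nonnegative integers), $M_\theta$ carries the primitive directions $(1,0)^{t}$ and $(0,1)^{t}$ to the primitive directions $(p,q)^{t}$ and $(-q,p)^{t}$, and uniformly scales every tropical length by $1/\sqrt{p^{2}+q^{2}}$; the cases $\theta = (2k+1)\pi/2$ as well as reflections along the coordinate axes preserve tropical length altogether, so these operations combine to cover the whole of $\Theta$.

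First I would invoke the previous lemma on the horizontal segment from $(0,0)$ to $(x,0)$, but with the target tropical length taken to be $\alpha' := \alpha\sqrt{p^{2}+q^{2}}$ (or simply $\alpha$ in the length-preserving cases). The hypothesis $\alpha' > x$ required by the previous lemma follows at once from $\alpha > x$ and $\sqrt{p^{2}+q^{2}} \geq 1$. By choosing the number of teeth $m$ large enough, the tooth height $\delta = (\alpha' - x)/m$ can be made smaller than the prescribed $\varepsilon$, so the horizontal cr\'eneau path lies inside $[0,x] \times [-\varepsilon,\varepsilon]$.

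Next I would apply $M_\theta$ (together with any necessary axis reflections) to this horizontal configuration. The image lies in $[0,x] \times [-\varepsilon,\varepsilon]$ rotated by $\theta$ about the origin; its endpoint is $(x\cos\theta, x\sin\theta)$ as required; and by the scaling computation its tropical length equals $\alpha'/\sqrt{p^{2}+q^{2}} = \alpha$. All angles at the vertices of the cr\'eneau remain $\pm \pi/2$ since $M_\theta$ is a rigid motion, all slopes remain rational since $\theta \in \Theta$, and the original integer weights on the rays continue to satisfy the balancing condition because $M_\theta$ is linear and permutes primitive lattice directions while preserving the integer multiplicities along each primitive direction.

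The only step needing any real care is the uniformity of the tropical-length scaling across the horizontal and vertical teeth of the cr\'eneau simultaneously, so that elongating horizontally to $\alpha'$ yields exactly $\alpha$ after rotation. This is precisely the content of the matrix calculation already carried out in the text just above the statement; consequently no substantive new obstacle arises, and the proof reduces to invoking the previous lemma with the rescaled target length.
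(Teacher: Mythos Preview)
Your proposal is correct and follows essentially the same route as the paper, which gives no separate proof but simply notes, after the $M_\theta$ computation, that ``our proof of our previous lemma is still valid'' because both axis directions of the cr\'eneau have their tropical lengths scaled by the same factor $1/\sqrt{p^{2}+q^{2}}$. One small imprecision worth flagging: $M_\theta$ is not a lattice automorphism and need not send every primitive vector to a primitive vector (for instance the diagonal ray direction $(-1,-1)^{t}$ can land on a non-primitive integer vector when $p,q$ are both odd), but this does no damage since by the definition of a cr\'eneau the balancing rays and their weights are chosen \emph{after} the path is laid down, not transported from the horizontal construction.
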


\subsection{Main Result}
:\\
\\
Now we will prove our main result using the previous lemmas, and discuss an application of it in the next section.

\begin{theorem}
Every abstract metric graph $\Gamma$ admits an isometric balanced embedding up to $\text{cross}\left(\Gamma\right)$ crossings restricted to $\Gamma$.
\end{theorem}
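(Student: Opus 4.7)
The plan is to combine the linear embedding of Lemma 2.4.1 with the length-adjustment Lemma 2.6.2 (the rotated créneau lemma), and then perform a final balancing step at each vertex by attaching one infinite ray (an elementary tropical modification) with a suitable weight. The output will be a balanced graph that realizes some tropical modification $\Gamma'$ of $\Gamma$, with weight $1$ on every bounded edge of $\Gamma$, and the only crossings among embedded edges of $\Gamma$ will be the $\text{cross}(\Gamma)$ crossings inherited from the initial linear embedding.

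First, I would apply Lemma 2.4.1 to embed $\Gamma$ (after a harmless tropical modification that turns multiple/loop edges into simple paths) as a piecewise linear, rational-slope polyhedral complex $G$ with exactly $\text{cross}(\Gamma)$ orthogonal crossings, each with local primitive vectors $(1,0)^t$ and $(0,1)^t$. After a uniform rescaling (which multiplies every tropical length by the same positive constant, preserving the combinatorial picture and the crossing count), I can assume that for every finite edge $e$ of $\Gamma$ the total tropical length $T(e)$ of the polygonal path representing $e$ in $G$ is strictly less than the prescribed length $\ell(e)$. Any edge with $\ell(e)=\infty$ is automatically handled because its embedding is a ray with the same length $\infty$.

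Second, I would adjust tropical lengths one edge at a time. For each finite edge $e$, pick one segment of its polygonal path, note its rational slope angle $\theta\in\Theta$, and apply Lemma 2.6.2 to insert a créneau on that segment so that the segment's tropical length grows by exactly $\ell(e)-T(e)$. By choosing the rectangle $[0,x]\times[-\varepsilon,\varepsilon]$ (rotated by $\theta$) containing the créneau to be thinner than the distance from the segment to every other edge of $G$ and to every crossing point, the insertion stays in a neighborhood disjoint from the rest of the figure. Hence no new crossings among edges of $\Gamma$ are introduced, and each crossing of $G$ is preserved unchanged, so the embedded edges of $\Gamma$ still meet in exactly $\text{cross}(\Gamma)$ points. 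After this step every embedded edge of $\Gamma$ has tropical length equal to the prescribed one, and the complex is a subdivision-plus-tropical-modification of $\Gamma$.

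Finally, I would enforce the balancing condition. At every créneau vertex the condition already holds by the definition of a créneau, so those vertices need nothing. At each original vertex $v$ of $\Gamma$ (now embedded somewhere in $\mathbb{R}^2$), compute $\vec{S}(v)=\sum_i \vec{r}_i(v)$, the sum over incident embedded edges of their primitive vectors with weight $1$. If $\vec{S}(v)=0$ there is nothing to do; otherwise write $-\vec{S}(v)=w_v\cdot\vec{p}_v$ where $\vec{p}_v$ is the primitive vector of $-\vec{S}(v)$ and $w_v\in\mathbb{Z}_{>0}$, and attach an infinite ray to $v$ in direction $\vec{p}_v$ with weight $w_v$. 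This is an elementary tropical modification, so the resulting graph still realizes a tropical modification $\Gamma'$ of $\Gamma$; it is balanced at every vertex; it has weight $1$ on every bounded edge of $\Gamma$; and its lengths restricted to $\Gamma$ are the prescribed ones. The step I expect to be the main obstacle is the second one: one must verify, uniformly over all edges simultaneously, that the créneau rectangles can be chosen small enough to stay pairwise disjoint and disjoint from all crossings and vertices of $G$, without forcing the créneau parameter $m$ in Lemma 2.6.2 to blow up in a way that conflicts with the already-fixed planar layout. A careful bookkeeping argument — fix $\varepsilon$ first as a uniform lower bound on the distances between distinct features of $G$, and only then choose each $m$ to accommodate the required elongation — should close this gap.
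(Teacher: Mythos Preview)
Your proposal is correct and follows essentially the same route as the paper: apply the rational piecewise-linear embedding lemma, rescale so every embedded segment is tropically shorter than its target, insert cr\'eneaux inside disjoint tubular neighborhoods to fix the lengths, and then attach one weighted infinite ray at each vertex to balance. The paper resolves your anticipated ``main obstacle'' exactly as you suggest---it first deletes small balls around all vertices of $G$, obtaining a disjoint union of closed segments, and then fixes a single $\varepsilon>0$ so that the $\varepsilon$-neighborhoods of those segments are pairwise disjoint before choosing the cr\'eneau parameters; the only cosmetic difference is that the paper assigns each subdivided piece of an edge length $l/n$ and elongates every segment, whereas you elongate a single segment per edge, and you should also remember to balance the degree-$2$ bend vertices coming from the piecewise-linear embedding, not only the original vertices of $\Gamma$ and the cr\'eneau vertices.
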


\begin{proof}Let $\Gamma$ be an abstract metric graph. By lemma 1.1, there exists an embedding $G$ of a tropical modification $\Gamma'$  of $\Gamma$ into the plane, such that every edge of $G$ is a straight line with rational slope, and there are exactly $\text{cross}\left(\Gamma\right)$ crossings when we restrict the embedding to $\Gamma$. If an edge in $\Gamma$ had length $l$ and was subdivided into $n$ edges, we define the lengths of the subdivided edges of $\Gamma'$ to be $\frac{l}{n}$.\\
\\
Let $\alpha$ be the smallest edge length of $\Gamma'$. We now scale down $G$ in the plane so that the tropical length corresponding to every segment of $G$ is less than $\alpha$. Let $\beta$ be the minimum euclidean length of the segments in $G$ and let
\\
\[H=G-\bigcup_{v\text{ vertex of }G}{N_{\frac{\beta}{4}}\left(v\right)}\]
\\
be the finite disjoint union of closed segments and rays obtained from $G$ by removing a ball of radius $\frac{\beta}{4}$ for every vertex $v$ of $G$. We can find $\varepsilon >0$ such that all the $\varepsilon-$neighbourhoods of these segments are disjoint.\\
\\
By lemma 1.3, we can insert cr\'eneau on the edges of $H$ which are contained in their respective edge's $\varepsilon-$neighborhood, in order to make their tropical length match the length of the corresponding edge of $\Gamma'$. Since the cr\'eneaux added to $H$ are contained in disjoint neighborhoods, no new interseccions are created in $G$.\\
\\
Moreover, since every segment has a rational slope, for each vertex $p\in G$, the sum of the primitive vectors of the edges and rays going out from $p$ must be a vector with integer coordinates. Let this vector be $\mathbf{v}_{p}=\left(x_{p}, y_{p}\right)^{t}$ and let $\omega_{p}=\gcd\left(x_{p}, y_{p}\right)$. Then adding an infinite ray $\vec{r}_{p}$ in the direction of $-\mathbf{v}_{p}$ with edge weight equal to $\omega_{p}$, along with weights equal to $1$ on the finite edges, makes the balancing condition hold on $p$. Doing this for every vertex $p\in G$, we produce a balanced graph.
\end{proof}

\section{Application to Algebraic Geometry}

In this section we present an application of our construction along with a result of Baker and Rabinoff. We start by recalling some of the assumptions needed, and some results about non-archimedean fields.\\

\subsection{$\Lambda$-rational Tropical Meromorphic Functions}

:\\
\\
Let $K$ be an algebraically closed field which is complete with respect to a nontrivial, non-Archimedean valuation $\text{val}:K\to \mathbb{R}\cup\left\{\infty\right\}$ and let $\Lambda=\text{val}\left(K^{\times}\right)$ be its value group. We begin this section by proving some results about $\Lambda$.\\

\begin{lemma}
If $\lambda\in \Lambda$ and $q\in \mathbb{Q}$ then $q\lambda\in \Lambda$.
\end{lemma}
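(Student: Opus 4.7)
The plan is to exploit the algebraic closedness of $K$ to extract roots. Write $q=m/n$ with $m\in\mathbb{Z}$ and $n\in\mathbb{Z}_{>0}$. Since $\lambda\in\Lambda$, choose $a\in K^\times$ with $\text{val}(a)=\lambda$. Then $a^{m}\in K^\times$ has $\text{val}(a^{m})=m\lambda$, using that the valuation is a group homomorphism from $K^\times$ to $\mathbb{R}$ (so it behaves well on integer powers, including negative ones via inverses).

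Next, because $K$ is algebraically closed, the polynomial $x^{n}-a^{m}\in K[x]$ splits, and in particular has a root $b\in K$. Since $a^{m}\ne 0$, necessarily $b\ne 0$, so $b\in K^\times$. Applying the valuation to $b^{n}=a^{m}$ gives $n\,\text{val}(b)=m\lambda$ in $\mathbb{R}$, hence $\text{val}(b)=(m/n)\lambda=q\lambda$. Since $b\in K^\times$, this shows $q\lambda\in\text{val}(K^\times)=\Lambda$, as desired.

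The only slightly delicate point is handling the edge cases cleanly: if $q=0$ the statement says $0\in\Lambda$, which follows from $\text{val}(1)=0$; and if $m<0$ one should use $a^{m}=(a^{-1})^{|m|}$ so that the argument with $x^{n}-a^{m}$ still produces an element of $K^\times$. These are not real obstacles, just bookkeeping. No step looks hard — the proof is essentially a one-line application of algebraic closure — so I would present it concisely in that form.
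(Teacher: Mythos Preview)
Your proof is correct and follows essentially the same approach as the paper: pick a preimage of $\lambda$ under $\text{val}$, use algebraic closedness of $K$ to extract an $n$-th root, and read off the valuation. The only cosmetic difference is that the paper first treats $q>0$ and then passes to negatives via $\text{val}(z^{-1})=-\text{val}(z)$, whereas you allow $m\in\mathbb{Z}$ from the start; the substance is identical.
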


\begin{proof}
Let $q=\frac{n}{m}>0$ with $m, n\in \mathbb{Z}^{+}$. Let $k\in K$ such that $\text{val}\left(k\right)=\lambda$. Then we have that
\\
\[\text{val}\left(k^{n}\right)=n\cdot\text{val}\left(k\right)=n\lambda.\]
\\
Since $K$ is algebraically closed, the equation $x^{m}-k^{n}=0$ has a solution in $K$. Let $z$ be such solution. Then we have that
\\
\[m\cdot \text{val}\left(z\right)=\text{val}\left(z^{m}\right)=\text{val}\left(k^{n}\right)=n\lambda\]
\\
and therefore $\text{val}\left(z\right)=q\lambda$, so $q\lambda\in \Lambda$. Note that $\text{val}\left(1\right)=\text{val}\left(1\right)+\text{val}\left(1\right)$ so $\text{val}\left(1\right)=0$. Hence
\\
\[\text{val}\left(z\right)+\text{val}\left(z^{-1}\right)=\text{val}\left(1\right)=0\]
\\
so $\text{val}\left(z^{-1}\right)=-q\lambda$. Therefore, for every $q\in \mathbb{Q}$ we have $q\lambda\in \Lambda$.\\
\end{proof}

\begin{lemma}
Let $\left(x_{1}, y_{1}\right), \left(x_{2}, y_{2}\right)\in \Lambda^{2}$ such that the segment in $\mathbb{R}^{2}$ which joins these points has rational slope. Then the tropical length $\lambda$ of this segment lies in $ \Lambda$.
\end{lemma}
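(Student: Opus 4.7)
The plan is to reduce the statement to Lemma 2.1 by unpacking the definition of tropical length. Recall that the tropical length $\lambda$ of the segment is defined as the unique positive real with $\lambda \vec{v} = (x_2 - x_1, y_2 - y_1)$, where $\vec{v} = (m,n)^t$ is the primitive integer vector parallel to the segment and pointing from $(x_1, y_1)$ to $(x_2, y_2)$. Since the segment has rational slope, such an integer primitive vector exists, with $\gcd(m, n) = 1$ and at least one of $m, n$ nonzero.

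First I would observe that $\Lambda$ is a subgroup of $(\mathbb{R}, +)$: indeed, the valuation satisfies $\text{val}(ab) = \text{val}(a) + \text{val}(b)$ and (as shown in the previous lemma's proof) $\text{val}(a^{-1}) = -\text{val}(a)$, so $\Lambda = \text{val}(K^\times)$ is closed under subtraction. Consequently, from $x_1, x_2, y_1, y_2 \in \Lambda$ we deduce $x_2 - x_1 \in \Lambda$ and $y_2 - y_1 \in \Lambda$.

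Next, I would split into cases according to the primitive vector. If $m \neq 0$, then $\lambda = (x_2 - x_1)/m$, and applying Lemma 2.1 with the rational number $q = 1/m$ to the element $x_2 - x_1 \in \Lambda$ gives $\lambda \in \Lambda$. If instead $m = 0$, then $n = \pm 1$ and $\lambda = \pm(y_2 - y_1) \in \Lambda$ directly. In either case we conclude $\lambda \in \Lambda$. There is no serious obstacle here; the content of the lemma is simply that Lemma 2.1 extends from scaling a single element of $\Lambda$ by a rational, to reading off the tropical length of a segment as one coordinate divided by an integer, which is a rational multiple of an element of $\Lambda$.
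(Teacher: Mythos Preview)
Your proof is correct and follows essentially the same route as the paper: write $\lambda(m,n)^{t}=(x_{2}-x_{1},\,y_{2}-y_{1})^{t}$, use that at least one of $m,n$ is nonzero to solve for $\lambda$ as a coordinate difference divided by an integer, and then invoke the previous lemma to conclude $\lambda\in\Lambda$. The only cosmetic difference is that the paper handles the two cases by a ``without loss of generality $m\neq 0$'' rather than your explicit case split, and it leaves the subgroup property of $\Lambda$ implicit.
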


\begin{proof}
Let $\left(m, n\right)^{t}$ be the primitive vector of this segment, considered as a ray going from $\left(x_{1}, y_{1}\right)$. Then, we have that
\\
\[\lambda
\begin{pmatrix}
m \\
n
\end{pmatrix}=
\begin{pmatrix}
x_{2}-x_{1} \\
y_{2}-y_{1}
\end{pmatrix}.\]
\\
One of $m, n$ must be non-zero, so without loss of generality, assume $m\neq 0$. Then $\lambda=\frac{x_{2}-x_{1}}{m}$ and since $x_{1}, x_{2}\in \Lambda$, we have that $x_{2}-x_{1}\in \Lambda$. By lemma 1, we have that $\frac{x_{2}-x_{1}}{m}\in \Lambda$ and therefore $\lambda\in \Lambda$.
\end{proof}

\begin{lemma}
Let $\left(x, y\right)\in \Lambda^{2}$ and let $\lambda\in \Lambda$ be a positive real number. If a segment in $\mathbb{R}^{2}$ with rational slope and endpoints in $\left(x, y\right)$ and $\left(x_{1}, y_{1}\right)$ has tropical length equal to $\lambda$, then $\left(x_{1}, y_{1}\right)\in \Lambda^{2}$.\\
\end{lemma}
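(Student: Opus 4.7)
The plan is to unwind the definition of tropical length to explicitly solve for $(x_1,y_1)$ in terms of $(x,y)$ and the primitive vector, and then apply the closure properties of $\Lambda$ established in the previous two lemmas.

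First I would write the primitive vector of the segment, viewed as a ray based at $(x,y)$, as $(m,n)^t$, where $m,n$ are coprime integers (possibly with one of them zero). By the definition of tropical length, we then have
\[
\lambda \begin{pmatrix} m \\ n \end{pmatrix}
=\begin{pmatrix} x_{1}-x \\ y_{1}-y \end{pmatrix},
\]
so that $x_1 = x + \lambda m$ and $y_1 = y + \lambda n$.

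Next, I would observe that $\Lambda = \mathrm{val}(K^{\times})$ is an additive subgroup of $\mathbb{R}$: indeed, $\mathrm{val}(ab) = \mathrm{val}(a) + \mathrm{val}(b)$ shows closure under addition, and $\mathrm{val}(a^{-1}) = -\mathrm{val}(a)$ (as derived in the proof of Lemma 2.1) shows closure under negation. In particular, any integer multiple of an element of $\Lambda$ is again in $\Lambda$; alternatively, this follows directly from Lemma 2.1 by taking $q = m$ or $q = n$. Hence $\lambda m \in \Lambda$ and $\lambda n \in \Lambda$.

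Finally, since $(x,y) \in \Lambda^2$ by hypothesis and $\Lambda$ is closed under addition, we conclude that $x_1 = x + \lambda m \in \Lambda$ and $y_1 = y + \lambda n \in \Lambda$, so $(x_1, y_1) \in \Lambda^2$, as required. There is no real obstacle here; the statement is essentially a direct corollary of Lemma 2.1 and the group structure of $\Lambda$, with the only minor point being that one of $m,n$ may vanish, which is handled uniformly since $0 \cdot \lambda = 0 \in \Lambda$.
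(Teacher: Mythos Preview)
Your proof is correct and follows essentially the same approach as the paper: write the primitive vector $(m,n)^t$, use the definition of tropical length to get $x_1 = x + m\lambda$ and $y_1 = y + n\lambda$, and conclude using that $\Lambda$ is closed under integer multiples and sums. The only difference is that you spell out the group structure of $\Lambda$ a bit more explicitly than the paper does.
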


\begin{proof}
Let $\left(m, n\right)^{t}$ be the primitive vector of our segment, considered as a ray going from $\left(x, y\right)$. Then, we have that
\\
\[\lambda
\begin{pmatrix}
m \\
n
\end{pmatrix}=
\begin{pmatrix}
x_{1}-x \\
y_{1}-y
\end{pmatrix}.\]
\\
Hence, we have that $x_{1}=x+m\lambda$ and $y_{1}=y+n\lambda$. Since $\lambda\in \Lambda$ then $n\lambda, m\lambda\in \Lambda$, and therefore $x_{1}, y_{1}\in \Lambda$ and $\left(x_{1}, y_{1}\right)\in \Lambda^{2}$.\\
\end{proof}

The following result is immediate from the previous lemma.\\

\begin{corollary}
Let $\left(x_{1}, y_{1}\right), \left(x_{2}, y_{2}\right)\in \Lambda^{2}$ be such that the segment in $\mathbb{R}^{2}$ joining them has rational slope. Then a point $\left(X, Y\right)$ on that segment is in $\Lambda^{2}$
if and only if its tropical distance to one of the segment endpoints lies in $\Lambda$.\\
\end{corollary}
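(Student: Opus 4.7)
The plan is to prove the two implications separately, invoking Lemma 2 for one direction and Lemma 3 for the other. Both segments we care about (from $(x_1,y_1)$ to $(X,Y)$, and from $(X,Y)$ to $(x_2,y_2)$) lie on the original rational-slope segment and hence inherit the same rational slope; this observation is what lets the earlier lemmas apply.

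For the forward direction, assume $(X,Y) \in \Lambda^2$ and fix an endpoint, say $(x_1,y_1)$. The subsegment joining $(x_1,y_1)$ to $(X,Y)$ has rational slope and both endpoints in $\Lambda^2$, so Lemma 2 immediately yields that its tropical length lies in $\Lambda$, which is exactly the tropical distance from $(X,Y)$ to that endpoint.

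For the reverse direction, suppose the tropical distance $\lambda$ from $(X,Y)$ to one of the endpoints, say $(x_1,y_1)$, lies in $\Lambda$. The segment from $(x_1,y_1)$ to $(X,Y)$ has rational slope (the same primitive vector as the full segment), starts at a point of $\Lambda^2$, and has tropical length $\lambda \in \Lambda$. Lemma 3 then applies verbatim and gives $(X,Y) \in \Lambda^2$.

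There is no real obstacle here: the corollary is essentially a packaging of Lemmas 2 and 3 into a single biconditional, and the only thing to be a bit careful about is the trivial remark that an arbitrary sub-segment of a rational-slope segment has the same slope, so the chosen endpoint $(x_1,y_1)$ versus $(x_2,y_2)$ does not matter.
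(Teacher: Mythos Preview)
Your proof is correct and matches the paper's approach: the paper simply states that the corollary ``is immediate from the previous lemma,'' and your argument spells out precisely this, using Lemma~3.2 for the forward direction and Lemma~3.3 for the reverse. The only additional content you supply is the (obvious) remark that a sub-segment inherits the rational slope of the full segment, which is exactly what is needed to invoke those lemmas.
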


Now, we study the skeletons of $K$-curves. Consider a connected, projective $K$-curve $X$, and consider its analytification $X^{\text{an}}$ in the sense of Berkovich \cite{Berkovich90}. The incidence graph $\Gamma$ of the special fiber of a semistable $R-$model $\mathfrak{X}$ of $X$ has the structure of a metric graph with edge lengths in $\Lambda$ \cite[p. 1]{BakerRabinoff13}. This graph $\Gamma$ is called the {\it skeleton} of $X$ and has an inclusion $\Gamma\hookrightarrow X^{\text{an}}$ by a deformation retract $\tau:X^{\text{an}}\to \Gamma$. Let $\Gamma\left(\Lambda\right)$ be the set of all points whose distance to every vertex in $\Gamma$ belongs to $\Lambda$.\\
  
\begin{definition}
A continuous function $f:\Gamma\to\mathbb{R}$ is {\bf $\Lambda-$rational} if $f\left(\Gamma\left(\Lambda\right)\right)\subset \Lambda$ and all points at which $f$ is not differentiable are contained in $\Gamma\left(\Lambda\right)$.\\
\end{definition}

\begin{definition}
A {\bf tropical meromorphic function} of $\Gamma$ is a continuous function $f:\Gamma\to \mathbb{R}$ which is piecewise affine and has integer slopes.\\ 
\end{definition}

Given the skeleton of a $K$-curve $X$, our goal will be to find two $\Lambda$-rational tropical meromorphic functions $f, g:\Gamma\to \mathbb{R}$ via our construction in lemma 1.1, in order to use the following result by Baker and Rabinoff.\\

\begin{proposition}
{\bf (Baker, Rabinoff)} \cite[p. 2, Corollary 1.2]{BakerRabinoff13} Let $K$ be a complete algebraically closed non archimedean field and let $X$ be a $K-$curve. Let $\Gamma$ be a skeleton of $X$ and $F:\Gamma\to \mathbb{R}$ be a continuous function. There exists a non zero rational function $f\in K\left(X\right)$ such that $F=-\log\left|f\right|\mid_{\Gamma}$ if and only if $F$ is a $\Lambda-$rational tropical meromorphic function.\\
\end{proposition}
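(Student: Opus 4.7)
The plan is to prove the biconditional in two independent directions. The forward direction, showing that $F := -\log|f||_\Gamma$ is $\Lambda$-rational tropical meromorphic for any $f \in K(X)^\times$, is a structural consequence of non-Archimedean analysis; the converse, constructing $f$ from an arbitrary admissible $F$, is the substantive content and hinges on lifting tropical data to algebraic data on $X$.

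For the forward direction, I would invoke the Slope Formula: at any tangent direction $v$ emanating from a point $p\in\Gamma$, the outgoing slope of $-\log|f||_\Gamma$ along $v$ equals $\sum_x \operatorname{ord}_x(f)$ summed over the $K$-points of $X$ retracting onto $v$ under $\tau$. This exhibits $-\log|f||_\Gamma$ as piecewise affine with integer slopes and with break-locus contained in the image of $\operatorname{supp}\operatorname{div}(f)$, which lies in $\Gamma(\Lambda)$ since the coordinates arise from valuations of elements of $K$. For any $x \in \Gamma(\Lambda)$ viewed as a type-$2$ point of $X^{\text{an}}$, $-\log|f(x)|$ is by definition the valuation of an element of $K$, hence lies in $\Lambda$.

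For the converse, starting with an admissible $F$, I would form the tropical divisor $D = \sum_p \sigma_p(F)\, p$, where $\sigma_p(F)$ is the sum of outgoing integer slopes of $F$ at $p$. This $D$ has degree zero, is supported in $\Gamma(\Lambda)$, and is tropically principal since it is a tropical Laplacian. Via surjectivity of the reduction map, each $p\in\operatorname{supp}D$ lifts to a $K$-point $\widetilde p \in X(K)$, producing $\widetilde D = \sum \sigma_p(F)\, \widetilde p$; one then corrects $\widetilde D$ within its linear equivalence class on $X$ to land on a principal divisor $\operatorname{div}(f)$ whose specialization is $D$. Applying the forward direction to this $f$, the functions $-\log|f||_\Gamma$ and $F$ have identical integer slopes on every edge, so they differ by a global constant $c \in \mathbb R$. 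By $\Lambda$-rationality of $F$ this $c$ lies in $\Lambda$, and scaling $f$ by an element of $K^\times$ of valuation $-c$ completes the argument.

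The main obstacle is the correction step in the converse: the naive lift $\widetilde D$ need not be principal on $X$, and producing a principal divisor in its specialization class requires the compatibility of $\operatorname{Jac}(X)$ with the tropical Jacobian of $\Gamma$ under the specialization homomorphism $\rho$ — concretely, the image of $\operatorname{Prin}(X)$ in tropical divisors must contain every principal tropical divisor supported on $\Gamma(\Lambda)$. This is where the deep model-theoretic content of Baker–Rabinoff's proof is deployed, relying on the structure of the semistable $R$-model $\mathfrak X$ and the bijection between its irreducible components and the vertices of $\Gamma$, and it is precisely the step a first-principles attempt would find most demanding.
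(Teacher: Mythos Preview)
The paper does not supply its own proof of this proposition. It is stated as Proposition~3.1 and attributed directly to Baker and Rabinoff \cite[Corollary~1.2]{BakerRabinoff13}, then used as a black box in the proof of Theorem~3.1. There is therefore no argument in the present paper to compare your proposal against.

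For what it is worth, your sketch is a faithful outline of the Baker--Rabinoff argument itself: the forward direction via the slope formula, and the converse via lifting a tropical principal divisor through the specialization map and the compatibility of Jacobians. You have also correctly identified the substantive step (surjectivity of specialization onto $\Lambda$-rational principal divisors) as the place where the real work lies. But since the authors of this paper explicitly defer the proof to the cited reference, no proof was expected here.
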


Namely, we prove the following two lemmas.\\

\begin{lemma}
Let $\Gamma$ be a skeleton of a $K$-curve. There exist an isometric balanced embedding of $\Gamma$ with $\text{cross}\left(\Gamma\right)$ crossings such that all the vertex of the embedded graph are in $\Lambda^{2}$.\\
\end{lemma}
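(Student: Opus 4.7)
The plan is to rerun the construction from Theorem 1.2 while carefully tracking, at every stage, that the new vertices lie in $\Lambda^{2}$ and not merely in $\mathbb{Q}^{2}$. All the closure properties needed are contained in Lemmas 2.1, 2.2, 2.3 and Corollary 2.1.

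I would first observe that $\Lambda$ is dense in $\mathbb{R}$: the valuation is nontrivial, so $\Lambda$ contains a nonzero element $\lambda_{0}$, and then Lemma 2.1 forces $\mathbb{Q}\lambda_{0}\subseteq \Lambda$. With this in hand, I would repeat the proof of Lemma 1.1 but replace the approximation of each vertex by a nearby point of $\mathbb{Q}^{2}$ with an approximation by a nearby point of $\Lambda^{2}$. This produces an embedding $G$ of a tropical modification $\Gamma'$ of $\Gamma$ by straight segments of rational slope, with $\text{cross}(\Gamma)$ crossings, and with all its vertices in $\Lambda^{2}$. The edge lengths of $\Gamma$ lie in $\Lambda$ because $\Gamma$ is the skeleton of a $K$-curve, and by Lemma 2.1 the subdivided lengths $l/n$ assigned to the edges of $\Gamma'$ still lie in $\Lambda$.

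Next, following the proof of Theorem 1.2, I would rescale $G$ by a positive rational factor $1/N$, with $N$ chosen large enough that every segment of the rescaled $G$ has tropical length less than the smallest edge length of $\Gamma'$. Multiplication by $1/N$ preserves $\Lambda^{2}$, and by Lemma 2.2 the tropical length of each segment of $G$ (with endpoints in $\Lambda^{2}$ and rational slope) then lies in $\Lambda$. On each such segment $\ell$, with current tropical length $x\in\Lambda$ and target tropical length $\alpha\in\Lambda$, I would place the entry and exit points of the créneau at tropical distances $x/3$ and $2x/3$ from one endpoint; since $x/3,\,2x/3\in\Lambda$ by Lemma 2.1, Corollary 2.1 puts these points in $\Lambda^{2}$. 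Inside the créneau, I would choose horizontal cap-lengths $x/(6m)\in\Lambda$ and vertical heights $\delta=(\alpha-x)/m\in\Lambda$, with $m$ large enough to satisfy the bound $\delta<\varepsilon$ from Lemma 1.3. Each créneau vertex is obtained from the previous one by moving along a rational-slope direction (parallel or perpendicular to $\ell$) by a tropical distance in $\Lambda$, so Lemma 2.3 inductively guarantees that every créneau vertex lies in $\Lambda^{2}$. The infinite balancing rays added at each vertex $p$ do not create new finite vertices, so all vertices of the final balanced graph lie in $\Lambda^{2}$.

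The main obstacle is essentially bookkeeping: one must choose the créneau parameters compatibly with $\Lambda$ at every stage, and verify that every straight sub-segment has both its endpoints and its tropical length in $\Lambda$ so that Corollary 2.1 can propagate the $\Lambda^{2}$ condition. Since Lemmas 2.1--2.3 supply exactly the closure properties required, the argument is a careful verification rather than a new geometric construction.
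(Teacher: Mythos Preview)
Your overall strategy matches the paper's: rerun the piecewise-linear embedding and the cr\'eneau construction while keeping all vertices in $\Lambda^{2}$, using the $\mathbb{Q}$-closure of $\Lambda$ to control the cr\'eneau parameters. The cr\'eneau bookkeeping you sketch is essentially what the paper does (the paper uses horizontal pieces of length $x/(6n-3)$ and $2n$ vertical pieces of length $(l-x)/(2n)$, but your variant works just as well once the counts are matched up).

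There is, however, one genuine gap. You write that after replacing the $\mathbb{Q}^{2}$-approximation in Lemma~1.1 by an approximation in $\Lambda^{2}$, the resulting embedding $G$ consists of ``straight segments of rational slope.'' That does not follow. For two arbitrary points $(a_{1},b_{1}),(a_{2},b_{2})\in\Lambda^{2}$ the slope $(b_{2}-b_{1})/(a_{2}-a_{1})$ is a \emph{quotient} of elements of $\Lambda$, and nothing in Lemmas~3.1--3.3 forces such a quotient to be rational; $\Lambda$ is a $\mathbb{Q}$-vector space but typically not a subfield of $\mathbb{R}$. Without rational slopes the primitive vectors, tropical lengths, cr\'eneau directions, and balancing rays all break down.

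The paper sidesteps this by keeping the Lemma~1.1 construction exactly as is (so the vertices lie in $\mathbb{Q}^{2}$ and the slopes are automatically rational), and only then rescaling the whole picture by a single scalar $\lambda\in\Lambda\cap(0,1)$. This places every vertex in $(\mathbb{Q}\lambda)^{2}\subseteq\Lambda^{2}$ while leaving all slopes unchanged. Your argument is easily repaired in the same spirit: either perform the paper's global rescaling, or, equivalently, approximate each vertex by a point of $(\mathbb{Q}\lambda_{0})^{2}$ rather than of $\Lambda^{2}$ in general.
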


\begin{proof}
Reconsider our construction in lemma 1.1. Let $G$ a rational polyhedral complex embedding of a modification which vertices are mapped to $\mathbb{Q}^{2}$. Let $\lambda \in \Lambda$ such that $\lambda \in (0,1)$. This $\lambda$ exists because $\Lambda$ is dense in the $\mathbb{R}$. Rescaling $G$ by $\lambda$, all its vertices lie in $\Lambda^{2}$, and the tropical lengths of the edges lie in $\Lambda$ by corollary 3.1, since the slopes remain rational after rescaling.\\
\\
We proceed to add the cr\'eneaux as in lemma 1.3, in such a way that the vertices of each cr\'eneau are in $\Lambda^{2}$ and the lengths of the edges of $\Gamma$ (which are in $\Lambda$) are equal to the tropical length of the corresponding edges in $G$. We insert the cr\'eneaux the following way. Suppose that the length of an edge in $\Gamma$ is $l\in \Lambda$ and we have a segment which is part of the embedding of that edge, which has tropical length $x$. Since the vertices of this segment are in $\Lambda^{2}$ then we have that $x\in \Lambda$. Hence, we have that $\frac{x}{n}, \frac{l-x}{n}\in \Lambda$ for any positive integer $n$. Consider then the following cr\'eneau on this segment.
\begin{figure}[h]
\begin{tikzpicture}
\centering
\draw[very thick] (0, 0) -- (3, 0) -- (3, 1) -- (3.5, 1) -- (3.5, 0) -- (4, 0) -- (4, 1) -- (4.5, 1) -- (4.5, 0) -- (5, 0) -- (5, 1) -- (5.5, 1) -- (5.5, 0) -- (8.5, 0);
\fill (0, 0) circle (2pt) node[left] {$\left(p_{1}\lambda, q_{1}\lambda\right)$};
\fill (8.5, 0) circle (2pt) node[right] {$\left(p_{2}\lambda, q_{2}\lambda\right)$};
\draw (3, .5) node[left] {$\frac{l-x}{2n}$};
\draw (3.25, 1) node[above] {$\frac{x}{6n-3}$};
\draw (1.5, 0) node[below] {$\frac{x}{3}$};
\draw (7, 0) node[below] {$\frac{x}{3}$};
\draw[->] (3, 0) -- (5, -2);
\draw[->] (3, 1) -- (1, 3);
\draw[->] (3.5, 1) -- (5.5, 3);
\draw[->] (3.5, 0) -- (1.5, -2);
\draw[->] (4, 0) -- (6, -2);
\draw[->] (4, 1) -- (2, 3);
\draw[->] (4.5, 1) -- (6.5, 3);
\draw[->] (4.5, 0) -- (2.5, -2);
\draw[->] (5, 0) -- (7, -2);
\draw[->] (5, 1) -- (3, 3);
\draw[->] (5.5, 1) -- (7.5, 3);
\draw[->] (5.5, 0) -- (3.5, -2);
\end{tikzpicture}
\caption{Cr\'eneau with lengths in $\Lambda$.}
\end{figure}
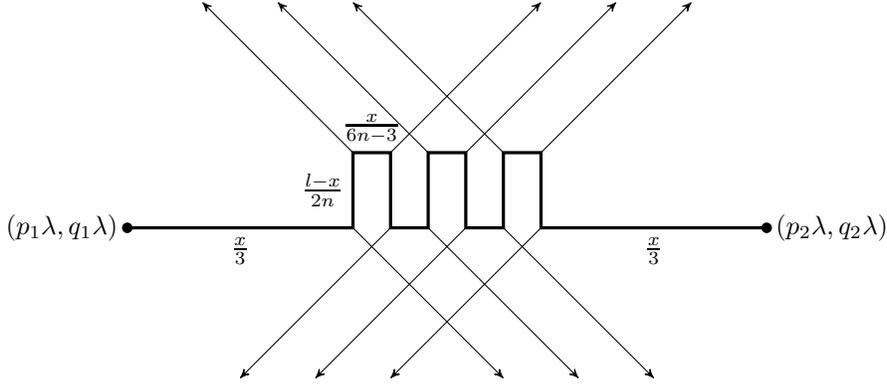
The length of our new path is the sum of the lengths of each segment forming it, which is
\\
\[\frac{x}{3}+\frac{x}{3}+\left(2n-1\right)\left(\frac{x}{6n-3}\right)+2n\left(\frac{l-x}{2n}\right)=x+l-x=l.\]
\\
Since the vertices of our initial segment are in $\Lambda^{2}$ and each of the new segments conforming the new path have lengths in $\Lambda$, then each one of the new vertices must be in $\Lambda^{2}$.\\
\\
After inserting suitable infinite rays in order for the balancing condition to hold on every vertex, we obtain a balanced graph which is isometric to $\Gamma$ and has exactly $\text{cross}\left(\Gamma\right)$ crossings.\\
\end{proof}

\begin{lemma}
Let $\Gamma$ be a skeleton of a $K$-curve and let $\phi:\Gamma\to \mathbb{R}^{2}$ be the mapping of $\Gamma$ to the balanced graph $G$ in lemma 3.4. Then the functions $f, g:\Gamma \to \mathbb{R}$ defined by
\\
\begin{align*}
f\left(p\right)&=\pi_{x}\left(\phi\left(p\right)\right) \\
g\left(p\right)&=\pi_{y}\left(\phi\left(p\right)\right) \\
\end{align*}
for every $p\in \Gamma$, are $\Lambda$-rational tropical meromorphic functions.\\
\end{lemma}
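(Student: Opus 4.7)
The plan is to verify three properties of $f$ and $g$: continuity together with piecewise affineness, integer slopes, and $\Lambda$-rationality. Continuity is immediate since $\phi:\Gamma\to\mathbb{R}^{2}$ is an embedding and the coordinate projections $\pi_{x},\pi_{y}$ are continuous. Piecewise affineness follows because the balanced graph $B(\Gamma)$ produced in Lemma 3.4 is a finite union of straight segments, so on each such segment the restrictions of $\pi_{x}\circ\phi$ and $\pi_{y}\circ\phi$ are affine, with breakpoints lying at the finite set of vertices of $B(\Gamma)$.

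For integer slopes, I would examine a single segment $e\subset B(\Gamma)$. By construction its primitive vector $(m,n)^{t}$ has coprime integer entries. If we parametrize $e$ by tropical length $t$, then a point on $e$ at tropical distance $t$ from one endpoint $w$ is $w+t(m,n)^{t}$. Consequently the derivatives along the edge of $f$ and $g$, with respect to the tropical length coordinate inherited from $\Gamma$, are exactly $m$ and $n$ respectively. Since the embedding is isometric for the tropical length metric on $\Gamma$, this gives integer slopes for $f$ and $g$ on each affine piece.

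For $\Lambda$-rationality we must check both conditions in the definition. For the non-differentiability condition, note that $f$ and $g$ are non-differentiable exactly at points $p\in\Gamma$ whose image $\phi(p)$ is a vertex of $B(\Gamma)$, either a vertex of the linear embedding constructed in Lemma 1.1 or a newly introduced corner of a cr\'eneau. Every such vertex lies in $\Lambda^{2}$ by Lemma 3.4, and its tropical distance to any vertex $v$ of $\Gamma$ is a finite sum of cr\'eneau segment lengths and edge lengths of $\Gamma'$, all of which belong to $\Lambda$ by the explicit formulas $\tfrac{x}{3},\,\tfrac{x}{6n-3},\,\tfrac{l-x}{2n}$ appearing in the proof of Lemma 3.4; hence $p\in\Gamma(\Lambda)$. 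For the image condition, take any $p\in\Gamma(\Lambda)$ lying on an edge $e$ of $\Gamma$ with endpoint $v_{1}$, and let $t=d(p,v_{1})\in\Lambda$. Walking along the image $\phi(e)$, we pass through cr\'eneau vertices at tropical distances from $\phi(v_{1})$ lying in $\Lambda$; let $w$ be the last such vertex preceding $\phi(p)$, at distance $s\in\Lambda$. Then $\phi(p)$ lies on a rational-slope segment issuing from $w\in\Lambda^{2}$ at tropical distance $t-s\in\Lambda$, so Corollary 3.1 yields $\phi(p)\in\Lambda^{2}$, and therefore $f(p),g(p)\in\Lambda$.

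The main obstacle is the second half of the $\Lambda$-rationality verification: one must track the tropical-length parametrization of each embedded edge of $\Gamma$ through its cr\'eneau and match points of $\Gamma(\Lambda)$ against points of $\phi(\Gamma)\cap\Lambda^{2}$. This is exactly the reason the cr\'eneau in Lemma 3.4 was designed with parameters $\tfrac{x}{3},\,\tfrac{x}{6n-3},\,\tfrac{l-x}{2n}$ in $\Lambda$; the preceding analysis then reduces to a direct application of Corollary 3.1.
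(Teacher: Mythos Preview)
Your proof is correct and follows essentially the same approach as the paper: compute the slopes of $f$ and $g$ on each segment via the primitive vector $(m,n)^{t}$, and invoke Corollary~3.1 to carry $\Gamma(\Lambda)$ into $\Lambda^{2}$. You are in fact more careful than the paper, which omits the explicit verification that the non-differentiable points of $f,g$ lie in $\Gamma(\Lambda)$; your argument tracking tropical distances through the cr\'eneau vertices fills this gap and is the right way to do it.
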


\begin{proof}
First note that $f$ and $g$ are linear on each edge of $G$. This implies that, if the primitive vector parallel to a segment $l$ with tropical length $\lambda$ with vertices $p$ and $q$ is $\left(m, n\right)^{t}$, then
\\
\[\lambda
\begin{pmatrix}
m \\
n
\end{pmatrix}=
\phi\left(q\right)-\phi\left(p\right)\]
\\
Hence, the slopes of $f$ on $l$ is
\\
\[\frac{\pi_{x}\left(\phi\left(q\right)\right)-\pi_{x}\left(\phi\left(p\right)\right)}{\lambda}=m\]
\\
and the slope of $g$ on $l$ is
\\
\[\frac{\pi_{y}\left(\phi\left(q\right)\right)-\pi_{y}\left(\phi\left(p\right)\right)}{\lambda}=n\]
\\
It is clear that $m$ and $n$ are integers and $\gcd\left(m, n\right)=1$. This shows that $f, g$ are tropical meromorphic functions. It remains to show that $f$ and $g$ are $\Lambda-$rational. Let $\Gamma\left(\Lambda\right)$ be set of points which have distance in $\Lambda$ to all vertices of $\Gamma$. By corollary 3.1, every point $p\in \Gamma\left(\Lambda\right)$ satisfies that $p\in \Lambda^{2}$. Hence, we must have that
\\
\[f\left(\Gamma\left(\Lambda\right)\right), g\left(\Gamma\left(\Lambda\right)\right)\subset \Lambda.\]
\\
Therefore, $f, g$ are $\Lambda-$rational tropical meromorphic functions.\\
\end{proof}

\subsection{Main Application}

:\\
\\
We conclude by proving our main application to Baker and Rabinoff's result.

\begin{theorem}
Let $X$ be a $K-curve$ and let $\Gamma$ be a skeleton of $X$. There exists a rational map $f:X\dashrightarrow \mathbb{P}^{2}$ such that the restriction $\text{trop}\left(f\right)=-\log\left|f\right|$ restricted to $\Gamma$ is an isometry up to $\text{cross}\left(\Gamma\right)$ crossings.\\
\end{theorem}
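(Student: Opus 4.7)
The plan is to assemble the required rational map by lifting the two coordinate projections of the embedding constructed in Lemma 3.4 via the Baker--Rabinoff correspondence (Proposition 3.1). First I invoke Lemma 3.4 on the skeleton $\Gamma$ to obtain an isometric balanced embedding $\phi$ of a tropical modification $\Gamma'$ of $\Gamma$ into $\mathbb{R}^{2}$, such that every vertex of the image lies in $\Lambda^{2}$ and the restriction of $\phi$ to the embedded edges of $\Gamma$ has exactly $\text{cross}(\Gamma)$ self-crossings. Lemma 3.5 then produces two $\Lambda$-rational tropical meromorphic functions
\[
F := \pi_{x} \circ \phi, \qquad G := \pi_{y} \circ \phi,
\]
defined on $\Gamma$ (one checks that the proof of Lemma 3.5 only uses integrality and rationality properties that are preserved when one restricts $\phi$ from $\Gamma'$ down to the subset $\Gamma$).

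Next I apply Proposition 3.1 separately to $F$ and $G$. Because each is a $\Lambda$-rational tropical meromorphic function on $\Gamma$, the Baker--Rabinoff result yields non-zero rational functions $\tilde{f},\, \tilde{g} \in K(X)$ such that
\[
-\log\lvert \tilde{f}\rvert\bigr|_{\Gamma} = F, \qquad -\log\lvert \tilde{g}\rvert\bigr|_{\Gamma} = G.
\]
I then define the rational map
\[
h : X \dashrightarrow \mathbb{P}^{2}, \qquad x \longmapsto [\,1 : \tilde{f}(x) : \tilde{g}(x)\,].
\]
On the standard affine chart $\{[1 : a : b]\} \subset \mathbb{P}^{2}$, tropicalization is given by $(-\log\lvert \cdot \rvert, -\log\lvert \cdot \rvert)$, so $\text{trop}(h)|_{\Gamma} = (F, G) = \phi|_{\Gamma}$. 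Since $\phi$ restricted to $\Gamma$ is an isometry onto its image exhibiting precisely $\text{cross}(\Gamma)$ crossings, the conclusion follows for $h$.

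The part I expect to require the most care is verifying that the restriction of the constructed tropicalization really coincides with $\phi|_{\Gamma}$ after discarding the auxiliary data of the modification. The balanced embedding built in Lemma 3.4 lives on $\Gamma'$, which carries extra infinite rays (from elementary tropical modifications added to enforce the balancing condition) and extra degree-two vertices (from the subdivisions that generate the cr\'eneaux). One must check that the Baker--Rabinoff proposition, when applied to $F$ and $G$ regarded as functions on $\Gamma$ alone, still produces the correct piecewise-linear behaviour on $\Gamma$, and that the crossings inherited from $\phi$ are not altered by the passage through $h$. Since tropical modifications do not change the isometry type of $\Gamma$ and since restriction commutes with the coordinate projection, this compatibility holds; the count of crossings in the image of $\text{trop}(h)|_{\Gamma}$ therefore matches the count in $\phi|_{\Gamma}$, giving exactly $\text{cross}(\Gamma)$ as required.
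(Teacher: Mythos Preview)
Your proposal is correct and follows essentially the same route as the paper: invoke Lemma~3.4 and Lemma~3.5 to obtain the coordinate functions as $\Lambda$-rational tropical meromorphic functions, lift each via Proposition~3.1 to $K(X)$, and package the two lifts as a rational map to $\mathbb{P}^{2}$ whose tropicalization on $\Gamma$ recovers $\phi|_{\Gamma}$. The paper's own proof is slightly terser (it omits your discussion of the $\Gamma$ versus $\Gamma'$ compatibility) but does make explicit the point that $\gcd(m,n)=1$ on each segment is what forces the tropicalization to be an isometry in the lattice-length metric; you rely on this implicitly through the phrase ``$\phi$ restricted to $\Gamma$ is an isometry,'' and it would not hurt to state it.
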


\begin{proof}
Consider our functions from corollary 3.5. By proposition 3.1, there exist rational functions $F, G\in K\left(X\right)$ such that $f=-\log\left|F\right|\mid_{\Gamma}$ and $g=-\log\left|G\right|\mid_{\Gamma}$, since $f$ and $g$ are $\Lambda-$rational tropical meromorphic. Therefore, the map $\left(F, G\right):X\to \mathbb{P}^{2}$ satisfies that the restriction to $\Gamma$ of its tropicalization $\left(f, g\right):\Gamma\to \mathbb{R}^{2}$ is an isometry (since for every pair of slopes $m$ and $n$ of $f$ and $g$, respectively, on the same segment, we showed $\gcd\left(m, n\right)=1$) with respect to the tropical length, with the exception of $\text{cross}\left(\Gamma\right)$ crossings.
\end{proof}

\section{Acknowledgements}

{\small I would like to thank my mentor Sylvain Carpentier for his invaluable help and guidance throughout this project, in learning and developing new mathematics. I would like to thank Dr. Melody Chan for suggesting this project, for her time, and for the very insightful discussions we had about this project. I would like to thank prof. David Jerison and prof. Ankur Moitra for offering very insightful input and comments which greatly improved the overall quality of the research. Finally, I would like to thank Dr. Slava Gerovitch and MIT for organizing the SPUR Program, thanks to which doing this research paper was possible.}

\end{document}